\newtheorem{Theorem}{Theorem}[section]
\newtheorem{lemma}[Theorem]{Lemma}
\newtheorem{proposition}[Theorem]{Proposition}
\newtheorem{corollary}[Theorem]{Corollary}
\newtheorem{definition}[Theorem]{Definition}
\newtheorem{remark}[Theorem]{Remark}
\newtheorem{example}[Theorem]{Example}
\newcommand{\ZZ}{\mathbb {Z}}
\newcommand{\tr}{\rhd}
\newcommand{\tl}{\lhd}
\newcommand{\wh}{\widehat }
\begin{document}
\title{Knot symmetries and the fundamental quandle}

\author{Eva Horvat}
\address{University of Ljubljana\\
Faculty of Education\\
Kardeljeva plo\v s\v cad 16\\
1000 Ljubljana, Slovenia}
\email{eva.horvat@pef.uni-lj.si}

\date{\today}
\keywords{knots, knot symmetries, fundamental quandle}
\subjclass[2010]{57M27, 57M05 (primary), 57M25 (secondary)}
\begin{abstract}

We establish a relationship between the knot symmetries and the automorphisms of the knot quandle. We identify the homeomorphisms of the pair $(S^{3},K)$ that induce the (anti)automorphisms of the fundamental quandle $Q(K)$. We show that every quandle (anti)automorphism of $Q(K)$ is induced by a homeomorphism of the pair $(S^{3},K)$. As an application of those results, we are able to explore some symmetry properties of a knot based on the presentation of its fundamental quandle, which is easily derived from a knot diagram.  
\end{abstract}
\maketitle

\begin{section}{Introduction}\label{sec1}
It is well known that the knot quandle is a complete knot invariant \cite{MA}. The knot quandle and various derived quandle invariants have been extensively used to study and distinguish nonequivalent knots. Our idea is to study knot equivalences from the viewpoint of the fundamental quandle. In this paper, we establish a relationship between the knot symmetries and the automorphisms of the knot quandle. Our goal is to investigate the complex topological information, hidden in the group of knot symmetries, from a purely algebraical perspective of the knot quandle.     
Our main results are the following. 
\begin{proposition}  Let $f\colon (S^{3},K)\to (S^{3},K)$ be a homeomorphism for which $[f|_{\partial N_{K}}]=\pm 1\in MCG(T^{2})$. Then $f$ induces a map $f_{*}\colon Q(K)\to Q(K)$ that is either a quandle automorphism or a quandle antiautomorphism.
\end{proposition}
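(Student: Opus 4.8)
The plan is to reduce the statement to a short path computation in the knot exterior. First I would replace $f$ by an isotopic homeomorphism of the pair $(S^{3},K)$ carrying a fixed tubular neighbourhood $N_{K}$ onto itself: since $f(K)=K$, the set $f(N_{K})$ is again a tubular neighbourhood of $K$, so by uniqueness of tubular neighbourhoods there is an ambient isotopy of $(S^{3},K)$, supported near $K$ and fixing $K$, that returns it to $N_{K}$. Then $f$ restricts to self-homeomorphisms of the exterior $E=S^{3}\setminus\mathrm{int}\,N_{K}$ and of $\partial N_{K}\cong T^{2}$. Using $[f|_{\partial N_{K}}]=\pm1\in MCG(T^{2})$ I would next isotope $f|_{\partial N_{K}}$ and, via the isotopy extension theorem applied inside a collar of $\partial N_{K}$ in $E$ (again away from $K$), $f$ itself, so that $f|_{\partial N_{K}}$ becomes the identity of $T^{2}$ in the case $+1$ and the involution $-\mathrm{Id}$ in the case $-1$. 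The only feature I shall use afterwards is the consequence: $f$ sends the meridian loop $m_{p}$ of $K$ based at a point $p\in\partial N_{K}$ to the meridian loop $m_{f(p)}$ in the case $+1$, and to its inverse $m_{f(p)}^{-1}$ in the case $-1$.

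Next I would write down the induced map in the path model of $Q(K)$, whose elements are homotopy classes of paths $\alpha$ in $E$ from a fixed basepoint $*$ to $\partial N_{K}$ (homotopies letting the terminal point roam freely over $\partial N_{K}$), with product $[\alpha]\tr[\beta]=[\beta\cdot m_{\beta}\cdot\bar\beta\cdot\alpha]$, where $m_{\beta}$ denotes the meridian of $K$ at $\beta(1)$. Fixing once and for all a path $\rho$ in $E$ from $f(*)$ to $*$, I would set $f_{*}([\alpha])=[\bar\rho\cdot(f\circ\alpha)]$. This is independent of the chosen representative, because $f$ carries an admissible homotopy of $\alpha$ to one of $f\circ\alpha$ (as $f(\partial N_{K})=\partial N_{K}$); and a different choice of $\rho$ changes $f_{*}$ only by the natural action of an element of $\pi_{1}(E,*)$ on $Q(K)$, hence by an inner automorphism, which is irrelevant to the conclusion.

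It then remains to check compatibility with the product. Expanding the definition,
\[
f_{*}\big([\alpha]\tr[\beta]\big)=\big[\,\bar\rho\cdot f(\beta)\cdot f(m_{\beta})\cdot f(\bar\beta)\cdot f(\alpha)\,\big],
\]
while, after inserting a null-homotopic loop $\rho\cdot\bar\rho$ at $*$,
\[
f_{*}([\alpha])\tr f_{*}([\beta])=\big[\,\bar\rho\cdot f(\beta)\cdot m_{f(\beta)}\cdot f(\bar\beta)\cdot f(\alpha)\,\big].
\]
In the case $+1$ one has $f(m_{\beta})=m_{f(\beta)}$, so the two agree and $f_{*}$ is a quandle homomorphism. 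In the case $-1$ one has instead $f(m_{\beta})=m_{f(\beta)}^{-1}$, so the first expression equals $f_{*}([\alpha])\tl f_{*}([\beta])$; thus $f_{*}$ intertwines $\tr$ with the dual operation $\tl=\tr^{-1}$, i.e.\ it is a quandle antihomomorphism. Finally, carrying out the same construction for $f^{-1}$ with a compatible choice of auxiliary path yields a two-sided inverse of $f_{*}$ modulo the inner ambiguity above, so $f_{*}$ is a bijection, hence a quandle automorphism in the case $+1$ and a quandle antiautomorphism in the case $-1$.

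The step I expect to cause the most trouble is the normalisation in the first paragraph: arranging $f(N_{K})=N_{K}$ and $f|_{\partial N_{K}}\in\{\mathrm{Id},-\mathrm{Id}\}$ through isotopies of the pair supported away from $K$, and matching the meridian information this produces with exactly the meridian data built into the quandle product. Once that is set up, verifying that $f_{*}$ preserves $\tr$ (up to swapping it with $\tl$) is the brief path computation displayed above.
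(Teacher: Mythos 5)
Your proposal is correct and follows essentially the same route as the paper: define $f_{*}$ by post-composing path representatives with $f$, check well-definedness via transported homotopies, and use that $[f|_{\partial N_{K}}]=\pm 1$ forces $f(m_{\beta})$ to be $m_{f(\beta)}^{\pm 1}$, so that $f_{*}$ intertwines $\tr$ with $\tr$ or with $\tl$; bijectivity comes from $f^{-1}$ as in the paper. Your extra normalisation of $N_{K}$ and the basepoint-correcting path $\rho$ are careful refinements of details the paper leaves implicit (it simply assumes $f$ fixes $z_{K}$), but they do not change the argument.
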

\begin{proposition}  Let $F\colon Q(K)\to Q(K)$ be an (anti)automorphism of the fundamental quandle of a nontrivial knot $K$. Then $F$ is induced by a homeomorphism that preserves the orientation of the normal bundle if $F$ is an automorphism, and reverses the orientation of the normal bundle if $F$ is an antiautomorphism. 
\end{proposition}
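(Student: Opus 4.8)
The plan is to push the problem from the fundamental quandle down to the knot group $G_K=\pi_1(S^3\setminus K)$, realize the resulting group (anti)automorphism by a homeomorphism of the knot exterior via Waldhausen's rigidity theorem, and then extend that homeomorphism across the tubular neighbourhood of $K$.

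First I would extend $F$ to $G_K$. Recall that $Q(K)$ embeds into $\mathrm{Conj}(G_K)$ as the conjugacy class of a meridian $\mu$, and that $\mathrm{As}(Q(K))\cong G_K$. If $F$ is a quandle automorphism, functoriality of $\mathrm{As}$ produces $\alpha:=\mathrm{As}(F)\in\mathrm{Aut}(G_K)$ with $\alpha|_{Q(K)}=F$ and $\alpha(\mu)$ conjugate to $\mu$. If $F$ is a quandle antiautomorphism, then $F$ is a quandle homomorphism from $Q(K)$ into the conjugation quandle of the opposite group $G_K^{\mathrm{op}}$, so by the universal property of $\mathrm{As}(Q(K))$ it extends to a group anti-homomorphism $\Psi\colon G_K\to G_K$ with $\Psi|_{Q(K)}=F$; applying the same to $F^{-1}$ shows $\Psi$ is a group anti-automorphism, and composing it with $g\mapsto g^{-1}$ yields $\alpha\in\mathrm{Aut}(G_K)$ with $\alpha(\mu)$ conjugate to $\mu^{-1}$. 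Composing $\alpha$ with an inner automorphism of $G_K$ — which merely reflects a change of basepoint and affects nothing below — I may assume $\alpha(\mu)=\mu^{\delta}$, with $\delta=+1$ when $F$ is an automorphism and $\delta=-1$ when $F$ is an antiautomorphism.

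Next I would read off how $\alpha$ acts on the periphery. Since $K$ is nontrivial, the centralizer of $\mu$ in $G_K$ is exactly the peripheral subgroup $P=\langle\mu,\lambda\rangle\cong\ZZ^2$; as $\alpha$ fixes $\mu^{\delta}$, it preserves $P$. On the abelianization $G_K^{\mathrm{ab}}\cong\ZZ$ the map $\alpha$ acts by $\delta$, and because $\lambda$ is null-homologous this forces $\alpha(\lambda)=\lambda^{\varepsilon}$ for some $\varepsilon=\pm1$, so that $\alpha|_P=\mathrm{diag}(\delta,\varepsilon)$ in the basis $(\mu,\lambda)$. The exterior $X_K$ is an irreducible Haken manifold with incompressible torus boundary (using nontriviality of $K$ once more), so by Waldhausen's theorem the peripheral-structure-preserving automorphism $\alpha$ of $\pi_1(X_K)$ is induced by a homeomorphism $h\colon X_K\to X_K$, which we may take with $h_*$ acting on $H_1(\partial X_K)$ by $\mathrm{diag}(\delta,\varepsilon)$. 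As $\mu$ bounds a disk in the solid torus $N_K$ and $h|_{\partial N_K}$ carries $\mu$ to $\mu^{\delta}=\pm\mu$, the homeomorphism $h$ extends across $N_K$ taking the core $K$ to itself; regluing yields $\hat h\colon(S^3,K)\to(S^3,K)$ with $[\hat h|_{\partial N_K}]=\mathrm{diag}(\delta,\varepsilon)\in MCG(T^2)$, and an argument parallel to the one behind the first Proposition shows $\hat h$ induces $F$ on $Q(K)$ (for a suitable choice of path from the basepoint to its image). Finally, an orientation of the normal bundle of $K$ is the same datum as a co-orientation of $K$, equivalently a choice of positively oriented meridian; hence $\hat h$ preserves the orientation of the normal bundle precisely when $\delta=+1$, i.e.\ when $F$ is a quandle automorphism, and reverses it when $\delta=-1$, i.e.\ when $F$ is a quandle antiautomorphism.

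I expect the substantive step to be the topological realization: using Waldhausen's theorem to upgrade the algebraic automorphism $\alpha$ to a homeomorphism of $X_K$ with the prescribed boundary action, and then verifying that the extended $\hat h$ really induces the given $F$ rather than a different (anti)automorphism in its inner class. A secondary, purely bookkeeping point is to align the passage $F\mapsto\alpha$ — which for an antiautomorphism routes through $G_K^{\mathrm{op}}$ and the inversion map — with the convention fixed earlier for the map a homeomorphism induces on $Q(K)$, so that meridian-reversing homeomorphisms match quandle antiautomorphisms; and one must keep track of the two places where nontriviality of $K$ is used, namely that the centralizer of $\mu$ is the peripheral $\ZZ^2$ and that $\partial X_K$ is incompressible.
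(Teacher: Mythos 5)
Your proposal is correct and follows essentially the same route as the paper: pass to the associated group $\mathrm{As}(Q(K))\cong G_{K}$, realize the resulting peripheral-structure-preserving automorphism by a homeomorphism of the knot exterior via Waldhausen's theorem, extend over the tubular neighbourhood using that the meridian is sent to $\mu^{\pm 1}$, and absorb the basepoint ambiguity by a suitable choice of connecting path. The only packaging difference is that the paper treats an antiautomorphism as a quandle isomorphism $Q(K)\to Q(K^{d})$ onto the dual quandle and feeds it into its Matveev-type realization theorem, whereas you route through $G_{K}^{\mathrm{op}}$ and composition with inversion; this is the same bookkeeping.
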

This paper is organized as follows. In the Section \ref{sec2}, we introduce the basic concepts from the theory of knot quandles that will be needed in the rest of the paper. Subsection \ref{subs21} contains the definition of the quandle, the augmented quandle, the associated group and some basics about the quandle homomorphisms. In the Subsection \ref{subs22} we define the quandle presentations. In the Subsection \ref{subs23}, we define the fundamental quandle of a knot in $S^{3}$, note some of its properties and describe its presentation. The Section \ref{sec3} contains the core results of the paper. Recalling the basics of knot symmetries, we first establish how and when a knot symmetry induces a knot quandle (anti)automorphism. Then we use a Theorem of Matveev to show that each knot quandle (anti)automorphism is induced by a knot symmetry. In the final Section \ref{sec4}, we do some calculations displaying the use of our results in investigating the symmetries of knots. We provide a \verb|Phyton| code for calculating the $Q$-group of symmetries from an alternating planar diagram of a knot. 
\end{section}

\begin{section}{Preliminaries}\label{sec2}

\begin{subsection}{The definition of a quandle}\label{subs21}
\begin{definition}\label{def1}\cite{JJ}
A \textbf{quandle} is an algebraic structure comprising a nonempty set $Q$ with two binary operations $(x,y)\mapsto x\tr y$ and $(x,y)\mapsto x\tl y$, which satisfies three axioms:
\begin{enumerate}
\item $x\tr x=x$.
\item $(x\tr y)\tl y=x=(x\tl y)\tr y$. 
\item $(x\tr y)\tr z=(x\tr z)\tr (y\tr z)$.   
\end{enumerate} 
\end{definition}
It follows from the second axiom that the map $S_{y}\colon Q\to Q$ defined by $S_{y}(x)=x\tr y$ is a bijection for any $y\in Q$, and by the third axiom it is an automorphism of $Q$. By the first quandle axiom the automorphism $S_{y}$ fixes $y$. $S_{y}$ is called an \textbf{inner automorphism} of $Q$. The subgroup $Inn(Q)$ of all the inner automorphisms is a normal subgroup of $Aut(Q)$ \cite{JJ}.

\begin{example} \label{ex1} Let $G$ be a group. Define the operations $\tr , \tl \colon G\times G\to G$ by $a\tr b=b^{-1}ab$ and $a\tl b=bab^{-1}$ for any $a,b\in G$. It is easy to see that these operations satisfy all the quandle axioms, and the resulting quandle is denoted by $G_{conj}$. Thus we obtain a quandle from any group by forgetting multiplication and setting both conjugations as the operations.  
\end{example}

\begin{example} \label{ex2} For any quandle $Q=(S,\tr ,\tl )$, the triple $(S,\tl ,\tr )$ defines another quandle, which we denote by $Q^{d}$ and is called the \textbf{dual quandle} of $Q$. 
\end{example} 

\begin{definition} \label{def2} Let $G$ be a group, acting on itself by conjugation as $g^{h}:=h^{-1}gh$ for any $g,h\in G$. An \textbf{augmented quandle} $(X,G)$ is a set $X$ with an action by the group $G$, written as $(x,g)\mapsto x^{g}$ and a function $\partial \colon X\to G$, satisfying the augmentation conditions 
\begin{enumerate}
\item $x^{\partial x}=x$ for all $x\in X$,
\item $\partial (x^{g})=g^{-1}(\partial x)g$ for all $x\in X, g\in G$.
\end{enumerate} The quandle operations on $X$ are then defined by $x\tr y:=x^{\partial y}$ and $x\tl y:=x^{(\partial y)^{-1}}$.
\end{definition}

\begin{definition} \label{def3} Let $Q_{1}$ and $Q_{2}$ be quandles.

 A \textbf{quandle homomorphism} from $Q_{1}$ to $Q_{2}$ is a map $f\colon Q_{1}\to Q_{2}$ satisfying $f(x\tr y)=f(x)\tr f(y)$ for any $x,y\in Q_{1}$. 

A \textbf{quandle antihomomorphism} from $Q_{1}$ to $Q_{2}$ is a map $g\colon Q_{1}\to Q_{2}$ satisfying $g(x\tr y)=g(x)\tl g(y)$ for any $x,y\in Q_{1}$.
\end{definition}

\begin{remark} \label{rem1} Observe that a quandle antihomomorphism $g\colon Q_{1}\to Q_{2}$ is actually a quandle homomorphism from $Q_{1}$ to $Q_{2}^{d}$. 
\end{remark}

\begin{lemma} \label{lemma1} If $f\colon Q_{1}\to Q_{2}$ is a quandle homomorphism, then $f(x\tl y)=f(x)\tl f(y)$. 

 If $f\colon Q_{1}\to Q_{2}$ is a quandle antihomomorphism, then $f(x\tl y)=f(x)\tr f(y)$. 
\end{lemma}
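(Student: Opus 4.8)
The plan is to derive both identities directly from the second quandle axiom, which says that for every $y$ the translation $S_{y}\colon x\mapsto x\tr y$ is a bijection whose inverse is $x\mapsto x\tl y$. Thus $\tl$ carries no information independent of $\tr$: the element $x\tl y$ is characterized as the unique $z$ with $z\tr y=x$. The idea is simply to push this characterization through $f$, being careful never to use invertibility of $f$ itself.

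First I would treat the homomorphism case. Given $x,y\in Q_{1}$, set $z=x\tl y$, so that $z\tr y=x$ by axiom (2). Applying $f$ and using $f(z\tr y)=f(z)\tr f(y)$ gives $f(z)\tr f(y)=f(x)$. Now apply axiom (2) in $Q_{2}$ to the pair $(f(z),f(y))$: since $(w\tr v)\tl v=w$, we get $f(z)=\bigl(f(z)\tr f(y)\bigr)\tl f(y)=f(x)\tl f(y)$. Since $f(z)=f(x\tl y)$, this is the claimed identity $f(x\tl y)=f(x)\tl f(y)$.

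The antihomomorphism case has exactly the same shape. With $z=x\tl y$ again, applying $f$ and using $f(z\tr y)=f(z)\tl f(y)$ gives $f(z)\tl f(y)=f(x)$; applying axiom (2) in $Q_{2}$ in the form $(w\tl v)\tr v=w$ with $w=f(z)$ and $v=f(y)$ yields $f(z)=f(x)\tr f(y)$, i.e. $f(x\tl y)=f(x)\tr f(y)$. Alternatively one can quote Remark \ref{rem1}: a quandle antihomomorphism $f\colon Q_{1}\to Q_{2}$ is a quandle homomorphism $f\colon Q_{1}\to Q_{2}^{d}$, and since the operation $\tl$ in $Q_{2}^{d}$ is the operation $\tr$ in $Q_{2}$, the second statement is just the first statement applied to $f\colon Q_{1}\to Q_{2}^{d}$.

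I do not expect any genuine obstacle; the single point requiring care is that $f$ need not be a bijection, so the argument must rely on the invertibility of the translations $S_{y}$ furnished by axiom (2) rather than on inverting $f$. Everything else is a two-line manipulation of the axioms.
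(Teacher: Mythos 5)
Your proof is correct and follows essentially the same route as the paper's: set $z=x\tl y$, use axiom (2) to get $z\tr y=x$, apply $f$, and invert the translation by $f(y)$ in $Q_{2}$. The extra care about not inverting $f$ itself and the alternative via Remark \ref{rem1} are fine but do not change the argument.
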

\begin{proof}Let $f\colon Q_{1}\to Q_{2}$ be a quandle homomorphism and choose $x,y\in Q_{1}$. Denoting $w=x\tl y$, we compute $w\tr y=x\Rightarrow f(w)\tr f(y)=f(x)\Rightarrow f(x\tl y)=f(w)=f(x)\tl f(y)$. A similar proof settles the case when $f$ is a quandle antihomomorphism. 
\end{proof}

\begin{lemma} \label{lemma2} For any quandle $Q$, the set $$Aut ^{C}(Q)=\{f\colon Q\to Q|\, f\textrm{ a bijection}, f\textrm{ a homomorphism or an antihomomorphism}\}$$ with the composition operation forms a group. 
\end{lemma}
\begin{proof} The set $Sym(Q)$ of all bijective maps of the set $Q$ to itself forms a group for composition. We would like to show that $Aut^{C}(Q)\subset Sym(Q)$ is a subgroup. Thus, we need to show that $Aut^{C}(Q)$ is closed under composition and invertation. A composition of two quandle automorphisms is obviously a quandle automorphism. Now let $g_{1},g_{2}\in Aut^{C}(Q)$ be two antihomomorphisms, and choose any $x,y\in Q$. We may compute:
\begin{xalignat*}{1}
& x\tr y=z\quad \Rightarrow \quad g_{2}(x)\tl g_{2}(y)=g_{2}(z)\quad \Rightarrow \quad g_{2}(z)\tr g_{2}(y)=g_{2}(x)\\
& \Rightarrow \quad g_{1}(g_{2}(z))\tl g_{1}(g_{2}(y))=g_{1}(g_{2}(x))\quad \Rightarrow \quad g_{1}(g_{2}(x))\tr g_{1}(g_{2}(y))=g_{1}(g_{2}(z))\;,
\end{xalignat*} thus $g_{1}\circ g_{2}$ is a quandle automorphism. In a similar way we show that a composition of a quandle homomorphism and a quandle antihomomorphism (or vice versa) is a quandle antihomomorphism.  

The inverse of a quandle automorphism is a quandle automorphism. The inverse of any quandle antihomomorphism $g$ is a quandle antihomomorphism, since their composition is the identity automorphism:
 \begin{xalignat*}{1}
& x\tr y=z\quad \Rightarrow \quad g(x)\tl g(y)=g(z)\quad \Rightarrow \quad g(z)\tr g(y)=g(x)\quad \Rightarrow \quad\\
& g^{-1}(g(z)\tr g(y))=x=z\tl y\quad \Rightarrow \quad g^{-1}(g(z)\tr g(y))=g^{-1}(g(z))\tl g^{-1}(g(y))\;.
\end{xalignat*}
\end{proof}

\begin{definition} \label{def4} Let $Q$ be a quandle. The \textbf{associated group} $\textrm{As}(Q)$ of the quandle $Q$ is defined as $\textrm{As}(Q)=F(Q)/K$, where $F(Q)$ is the free group, generated by the set $Q$, and $K$ is the normal subgroup of $F(Q)$, given by $K=\{(a\tr b)b^{-1}a^{-1}b,\, (a\tl b)ba^{-1}b^{-1}|\, a,b\in Q\}$. 
\end{definition}

\begin{lemma}\label{lemma3} Let $Q$ be a quandle and let $\eta \colon Q\to As(Q)$ be the natural map. Let $G$ be a group with the conjugation quandle $G_{conj}$ and the natural map $\partial \colon G_{conj}\to G$. Given any quandle homomorphism $f\colon Q\to G_{conj}$, there exists a unique group homomorphism $f_{\# }\colon As(Q)\to G$ such that $f_{\# }\circ \eta =\partial \circ f$. 
\[
  \begin{tikzcd}
   Q \arrow{r}{\eta} \arrow[swap]{d}{f} & As(Q) \arrow[dashed]{d}{f_{\# }}\\
    G_{conj} \arrow{r}{\partial } & G
 \end{tikzcd}
\]
\end{lemma}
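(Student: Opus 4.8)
The plan is to construct $f_\#$ directly on the free group and then check it descends to the quotient. Since $F(Q)$ is free on the set $Q$, the set map $\partial\circ f\colon Q\to G$ extends uniquely to a group homomorphism $\widetilde{f}\colon F(Q)\to G$ determined by $\widetilde f(a)=\partial(f(a))$ for each generator $a\in Q$. To get an induced homomorphism on $\mathrm{As}(Q)=F(Q)/K$, I must verify that $\widetilde f$ kills every generator of the normal subgroup $K$, i.e.\ that $\widetilde f\bigl((a\tr b)b^{-1}a^{-1}b\bigr)=1$ and $\widetilde f\bigl((a\tl b)ba^{-1}b^{-1}\bigr)=1$ for all $a,b\in Q$. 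This is where the hypothesis that $f$ is a \emph{quandle homomorphism into $G_{conj}$} enters: in $G_{conj}$ the operation is $x\tr y=y^{-1}xy$, so $f(a\tr b)=f(b)^{-1}f(a)f(b)$ as elements of $G$, and applying $\partial$ (which on $G_{conj}$ is just the identity set-inclusion $G_{conj}\to G$) gives $\partial f(a\tr b)=\partial f(b)^{-1}\,\partial f(a)\,\partial f(b)$. Hence
\[
\widetilde f\bigl((a\tr b)b^{-1}a^{-1}b\bigr)=\partial f(a\tr b)\cdot\partial f(b)^{-1}\partial f(a)^{-1}\partial f(b)=1,
\]
and the $\tl$-relator is handled identically using $x\tl y=yxy^{-1}$ in $G_{conj}$. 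Therefore $K\subseteq\ker\widetilde f$, and by the universal property of quotient groups there is a unique homomorphism $f_\#\colon \mathrm{As}(Q)\to G$ with $f_\#\circ\pi=\widetilde f$, where $\pi\colon F(Q)\to\mathrm{As}(Q)$ is the projection.

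It remains to identify the composite $\eta\colon Q\to\mathrm{As}(Q)$ as $\pi$ restricted to the generating set, so that for $a\in Q$ we have $f_\#(\eta(a))=f_\#(\pi(a))=\widetilde f(a)=\partial(f(a))$; this is exactly the commutativity $f_\#\circ\eta=\partial\circ f$ asserted by the diagram. For uniqueness of $f_\#$: the image $\eta(Q)$ generates $\mathrm{As}(Q)$ as a group (since $Q$ generates $F(Q)$ and $\pi$ is surjective), so any group homomorphism out of $\mathrm{As}(Q)$ is determined by its values on $\eta(Q)$; the requirement $f_\#\circ\eta=\partial\circ f$ pins down those values, forcing uniqueness.

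None of the steps is a serious obstacle — the lemma is a standard "free object modulo relations" argument. The only point requiring a little care is the bookkeeping of which map is which: one must keep straight that $\partial$ on $G_{conj}$ is the identity-on-underlying-sets map $G_{conj}\to G$, so that "$f$ is a quandle homomorphism $Q\to G_{conj}$" translates, after composing with $\partial$, into the concrete conjugation identity $\partial f(a\tr b)=\partial f(b)^{-1}\partial f(a)\partial f(b)$ in $G$ that makes the relators of $K$ die. Once that translation is in hand, the verification $K\subseteq\ker\widetilde f$ is a one-line computation, and the rest is the universal property of quotients plus the observation that $\eta(Q)$ generates the target.
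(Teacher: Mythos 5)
Your proof is correct and follows essentially the same route as the paper's: extend $\partial\circ f$ to the free group $F(Q)$, use the fact that $f$ lands in $G_{conj}$ to show the relators generating $K$ map to the identity, and factor through the quotient $\mathrm{As}(Q)=F(Q)/K$. You supply slightly more detail than the paper on the uniqueness step (via $\eta(Q)$ generating $\mathrm{As}(Q)$), but the argument is the same.
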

\begin{proof} Let $As(Q)=F(Q)/K$, where $K$ is the normal subgroup from the Definition \ref{def4}. The quandle homomorphism $f$ induces a map $\phi \colon F(Q)\to G$. By the definition of a quandle homomorphism, the quandle operations of the Example \ref{ex1} yield $\phi (a\tr b)=\phi (b)^{-1}\phi (a)\phi (b)$ and $\phi (a\tl b)=\phi (b)\phi (a)\phi (b)^{-1}$ for every $a,b\in F(Q)$, thus the normal subgroup $K$ lies in the kernel of $\phi $. It follows that $\phi $ factors through a unique group homomorphism $f_{\# }\colon As(Q)\to G$.
\end{proof}

\end{subsection}

\begin{subsection}{Quandle presentations}\label{subs22}
We recall the following from \cite{MAN}. Let $A$ be a set we call the \textit{alphabet}, whose elements we call the \textit{letters}. A \textit{word} in $A$ is any finite sequence, consisting of letters and the symbols $(, ), \tr $ and $\tl $. Define inductively the set $\mathcal{D}(A)$ of \textit{admissible words} according to the rules:
\begin{enumerate}
\item For any $a\in A$, the word $a$ is admissible,
\item If $w$ and $z$ are admissible words, then $w\tr z$ and $w\tl z$ are also admissible,
\item There are no other admissible words except those obtained inductively by the rules (1) and (2). 
\end{enumerate}
Let $R$ be a set of \textit{relations}; these are identities of the form $r_{i}=s_{i}$, where $r_{i},s_{i}\in \mathcal{D}(A)$. Define an equivalence relation $\sim $ on the set $\mathcal{D}(A)$ as follows: $w_{1}\sim w_{2}$ if and only if $w_{1}$ can be transformed into $w_{2}$ by some sequence of the rules (1) - (5) described below: 
\begin{enumerate}
\item $x\tr x \Leftrightarrow x$; 
\item $(x\tr y)\tl y \Leftrightarrow x$;
\item $(x\tl y)\tr y \Leftrightarrow x$;
\item $(x\tr y)\tr z \Leftrightarrow (x\tr z)\tr (y\tr z)$;
\item $r_{i} \Leftrightarrow s_{i}$.
\end{enumerate}  
The set of equivalence classes $\mathcal{D}(A)/_{\sim }$ is denoted by $\Gamma \langle A|R\rangle $. It is easy to show that $\Gamma \langle A|R\rangle $ is a quandle, and we call $\langle A|R\rangle $ the \textit{presentation} of this quandle.
\end{subsection}

\begin{subsection}{The fundamental quandle of a knot}\label{subs23}
We briefly summarize the following from \cite{MA} for the reader's convenience. Let $K$ be a knot in $S^{3}$ with a fixed orientation of its normal bundle. Denote by $N_{K}$ the regular neighborhood of $K$ and let $E_{K}=S^{3}-\textrm{Int}(N_{K})$.  Choose a basepoint $z_{K}\in E_{K}$, a point $z'_{K}\in \partial N_{K}$ and a path $s_{K}\subset E_{K}$ from $z'_{K}$ to $z_{K}$. Denote by $G_{K}:=\pi _{1}(E_{K},z_{K})$ the fundamental group of the knot. The path $s_{K}$ defines an inclusion of $\pi _{1}(\partial N_{K},z'_{K})$ into $G_{K}$ by the formula $\widehat{a}\mapsto [\overline{s}_{K}\cdot a\cdot s_{K}]$, where $[n]=\widehat{n}$. The image of this inclusion is the peripheral subgroup $H_{K}\leq G_{K}$. Denote $$\Gamma _{K}=\{\textrm{homotopy classes of paths in $E_{K}$ from a point in $\partial N_{K}$ to $z_{K}$}\}\;.$$ During the homotopy the initial point may move around on $\partial N_{K}$, while the final point is kept fixed. The group $G_{K}$ acts on the set $\Gamma _{K}$ by $\widehat{a}^{\widehat{g}}:=[a\cdot g]$. 

Using this action, the set $\Gamma _{K}$ may be equipped with a structure of an augmented quandle. Any $p\in \partial N_{K}$ lies on a unique meridian circle of the normal circle bundle and we denote by $m_{p}$ the loop based at $p$ which follows around the meridian in the positive direction. The image of $m_{z'}$ in $H_{K}$ is denoted by $m_{K}$ and is called a \textbf{meridian} of $K$. 

\begin{definition}\label{def5}The \textbf{fundamental quandle} $Q(K)$ of the knot $K$ is the augmented quandle $(\Gamma _{K},G_{K})$ as above with the function $\partial \colon \Gamma _{K}\to G_{K}$ defined as follows. Given two classes $\widehat{a},\widehat{b}\in \Gamma _{K}$, which are represented by the paths $a$ and $b$ respectively, define $\partial (\widehat{b})=[\overline{b}\cdot m_{b(0)}\cdot b]$, where $\cdot $ denotes concatenation of paths. This produces the quandle operations 
\begin{xalignat*}{1}
& \widehat{a}\tr \widehat{b}=\widehat{a} ^{\partial (\widehat{b})}=[a\cdot \overline{b}\cdot m_{b(0)}\cdot b]\textrm{  and  }\widehat{a}\tl \widehat{b}=\widehat{a} ^{\partial (\wh{b})^{-1}}=[a\cdot \overline{b}\cdot \overline{m}_{b(0)}\cdot b]
\end{xalignat*}
\end{definition}

\begin{remark} \label{rem2} For a knot $K$ with a fixed orientation of its normal bundle, denote by $K^{d}$ the same knot with the opposite orientation of the normal bundle. It follows that $Q(K^{d})$ is the dual quandle of $Q(K)$, as defined in the Example \ref{ex2}. 
\end{remark}

\begin{lemma} \label{lemmax} The function $\partial \colon \Gamma _{K}\to G_{K}$ is a bijection.
\end{lemma}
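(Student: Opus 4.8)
To prove that $\partial\colon\Gamma_{K}\to G_{K}$ is a bijection, the plan is to determine $\partial$ completely from the transitive action of $G_{K}$ on $\Gamma_{K}$ together with the peripheral structure of the knot group. I would fix the reference class $\widehat{s}_{K}=[s_{K}]\in\Gamma_{K}$ coming from the chosen path from $z'_{K}$ to $z_{K}$, and study the orbit map $\mathrm{orb}\colon G_{K}\to\Gamma_{K}$, $\widehat g\mapsto\widehat{s}_{K}^{\,\widehat g}=[s_{K}\cdot g]$. Unwinding the concatenation of Definition \ref{def5} and inserting $s_{K}\cdot\overline{s}_{K}$ shows that $\partial(\widehat{s}_{K}^{\,\widehat g})=g^{-1}m_{K}g$, so that $\partial=c\circ\mathrm{orb}$ with $c\colon G_{K}\to G_{K}$, $c(g)=g^{-1}m_{K}g$. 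In particular every value of $\partial$ is a conjugate of $m_{K}$, so the lemma splits into understanding the fibres of $c$ and the surjectivity of $\mathrm{orb}$, and the image of $\partial$ will be exactly the conjugacy class of the meridian inside $G_{K}$.

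To pin down $\Gamma_{K}$ I would pass to the universal cover $q\colon\widetilde{E}_{K}\to E_{K}$ and lift each representative path so that its terminal point lies over $z_{K}$; the homotopies permitted in $\Gamma_{K}$ (initial point free on $\partial N_{K}$, terminal point fixed) lift to homotopies whose initial endpoint moves within $q^{-1}(\partial N_{K})$. This identifies $\Gamma_{K}$ with $\pi_{0}\big(q^{-1}(\partial N_{K})\big)$, hence with the right cosets $H_{K}\backslash G_{K}$, where $H_{K}$ is the peripheral subgroup; in particular the $G_{K}$-action is transitive, $\mathrm{orb}$ is onto, and the image of $\partial$ is precisely the set of meridians.

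The substantive point is injectivity. Writing $\widehat a=\widehat{s}_{K}^{\,\widehat{g_{a}}}$ and $\widehat b=\widehat{s}_{K}^{\,\widehat{g_{b}}}$, the relation $\partial(\widehat a)=\partial(\widehat b)$ gives $g_{a}^{-1}m_{K}g_{a}=g_{b}^{-1}m_{K}g_{b}$, so $g_{b}g_{a}^{-1}$ commutes with $m_{K}$. The argument therefore rests on the classical fact that, for a nontrivial knot, the centralizer of a meridian in $G_{K}$ is exactly the peripheral subgroup $H_{K}$; this is the step I expect to be the main obstacle, since it is where the topology of the complement (asphericity of $E_{K}$ and the torus theorem) enters rather than any formal quandle computation. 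Granting it, $g_{b}g_{a}^{-1}\in H_{K}=\mathrm{Stab}(\widehat{s}_{K})$, whence $H_{K}g_{a}=H_{K}g_{b}$ and $\widehat a=\widehat b$. I would finish by recording the explicit inverse, which sends a meridian $w=g^{-1}m_{K}g$ to $\widehat{s}_{K}^{\,\widehat g}$ and is well defined precisely because $C_{G_{K}}(m_{K})=H_{K}$; this exhibits $\partial$ as a bijection of $\Gamma_{K}$ onto the meridians of $G_{K}$.
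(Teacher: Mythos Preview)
Your argument is correct and takes a genuinely different route from the paper's.  It also proves a sharper statement: $\partial$ is a bijection from $\Gamma_{K}$ onto the conjugacy class of the meridian $m_{K}$ in $G_{K}$, not onto all of $G_{K}$.  The lemma as literally phrased cannot hold, since every value of $\partial$ is a conjugate of $m_{K}$, while for a nontrivial knot elements such as the longitude are not meridians; only the injectivity of $\partial$ is actually used later (in the proof of Theorem~\ref{th2}), and you establish that.

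The paper argues injectivity by asserting that a based homotopy $h_{t}$ from $\overline{a}\cdot m_{a(0)}\cdot a$ to $\overline{b}\cdot m_{b(0)}\cdot b$ can be reparametrised so that $h|_{[2/3,1]}$ is a homotopy from $a$ to $b$ with initial point sliding on $\partial N_{K}$; this decomposition of a loop homotopy into compatible piecewise homotopies is not justified and does not follow from general principles.  For surjectivity the paper notes only that the Wirtinger generators lie in the image of $\partial$, which shows the image \emph{generates} $G_{K}$ but not that $\partial$ is onto.  Your approach instead identifies $\Gamma_{K}\cong H_{K}\backslash G_{K}$ via the universal cover and computes $\partial(H_{K}g)=g^{-1}m_{K}g$, reducing injectivity to the equality $C_{G_{K}}(m_{K})=H_{K}$ for a nontrivial knot.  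That equality is indeed the substantive topological input; it is also invoked without proof in the paper's own argument for Theorem~\ref{th2} (``the stabilizer subgroup of $\widehat{s}_{K_{i}}$\ldots is the same as the centralizer subgroup of $m_{K_{i}}$''), so you are not importing anything the paper does not already rely on.  What your approach buys is a transparent equivalence between injectivity of $\partial$ and a well-known structural fact about knot groups, together with an explicit description of the image and an explicit inverse; what the paper's approach would buy, were the reparametrisation step valid, is a self-contained homotopy argument avoiding the centraliser theorem.
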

\begin{proof} Let $\partial \wh{a}=\partial \wh{b}$ for two elements $\wh{a},\wh{b}\in \Gamma _{K}$. It follows that there exists a homotopy $h_{t}\colon [0,1]\to E_{K}$, such that $h_{0}=\overline{a}m_{a(0)}a$ and $h_{1}=\overline{b}m_{b(0)}b$. Using a suitable reparametrization, we may assume that $h|_{[\frac{2}{3},1]}$ is a homotopy between the paths $a$ and $b$, thus $\wh{a}=\wh{b}$. 

To see that $\partial $ is surjective, observe that the elements $\partial x$ for $x\in \Gamma _{K}$ are exactly the generators of $G_{K}$ in the Wirtinger presentation of the knot group. 
\end{proof} 

\begin{lemma}\cite[Lemma 2]{MA} \label{lemma4} For any knot $K$, the action of the group $G_{K}$ on $Q(K)$ is transitive. The stabilizer subgroup of the element $\widehat{s}_{K}=[s_{K}]\in Q(K)$ coincides with $H_{K}$. 
\end{lemma}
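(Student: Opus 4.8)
The plan is to prove transitivity by exhibiting an explicit group element carrying one class to another, and then to identify $\mathrm{Stab}(\widehat{s}_{K})$ with $H_{K}$ by a direct double inclusion. The recurring tool is the defining feature of $Q(K)$: a homotopy between representatives is allowed to drag the initial point around inside $\partial N_{K}$, while the terminal point $z_{K}$ stays fixed. For transitivity, let $\widehat{a},\widehat{b}\in Q(K)$ be represented by paths $a,b$ from $a(0),b(0)\in\partial N_{K}$ to $z_{K}$. Since $\partial N_{K}$ is path-connected, I would choose a path $\gamma\subset\partial N_{K}$ from $a(0)$ to $b(0)$ and set $g:=\overline{a}\cdot\gamma\cdot b$, a loop based at $z_{K}$, so $\widehat{g}\in G_{K}$. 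Then, directly from the definition of the action, $\widehat{a}^{\widehat{g}}=[a\cdot g]=[a\cdot\overline{a}\cdot\gamma\cdot b]=[\gamma\cdot b]$, and sliding the initial point backwards along $\gamma$ shows $[\gamma\cdot b]=[b]=\widehat{b}$.

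For the inclusion $H_{K}\subseteq\mathrm{Stab}(\widehat{s}_{K})$, recall that every element of $H_{K}$ has the form $\widehat{g}=[\overline{s}_{K}\cdot a\cdot s_{K}]$ for some loop $a$ in $\partial N_{K}$ based at $z'_{K}$. Then $(\widehat{s}_{K})^{\widehat{g}}=[s_{K}\cdot g]=[s_{K}\cdot\overline{s}_{K}\cdot a\cdot s_{K}]=[a\cdot s_{K}]$, and contracting the loop $a$ inside $\partial N_{K}$ (drag the initial point once around $a$) yields $[a\cdot s_{K}]=[s_{K}]=\widehat{s}_{K}$, so $\widehat{g}$ fixes $\widehat{s}_{K}$.

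The reverse inclusion $\mathrm{Stab}(\widehat{s}_{K})\subseteq H_{K}$ is the step I expect to require the most care. Suppose $\widehat{g}$ fixes $\widehat{s}_{K}$, so that $s_{K}\cdot g$ and $s_{K}$ represent the same class in $\Gamma_{K}$, and fix a homotopy $h\colon[0,1]^{2}\to E_{K}$ (after the usual reparametrization) with $h(-,0)=s_{K}\cdot g$, $h(-,1)=s_{K}$, $h(1,t)=z_{K}$ for all $t$, and $h(0,t)\in\partial N_{K}$ for all $t$; then $\delta(t):=h(0,t)$ is a loop in $\partial N_{K}$ based at $z'_{K}$. Reading the boundary of the square from $(0,0)$ to $(1,1)$ in the two ways --- along the bottom edge then the right edge, versus along the left edge then the top edge --- and using that $h$ fills the square, one obtains $s_{K}\cdot g\simeq\delta\cdot s_{K}$ rel endpoints in $E_{K}$, hence $[g]=[\overline{s}_{K}\cdot\delta\cdot s_{K}]$, which lies in $H_{K}$ by the very definition of the peripheral subgroup. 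The subtleties here are to pin down exactly what ``the same class in $\Gamma_{K}$'' means (terminal endpoint fixed, initial endpoint free in $\partial N_{K}$), to track the orientation of every concatenation, and to correctly convert the square-filling homotopy into the rel-endpoints identity $s_{K}\cdot g\simeq\delta\cdot s_{K}$; once the ``drag the endpoint inside $\partial N_{K}$'' maneuver is available, the other two parts are routine.
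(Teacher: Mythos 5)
Your proposal is correct and follows essentially the same route as the paper's own proof: transitivity via the loop $\overline{a}\cdot\gamma\cdot b$ with $\gamma$ a path in $\partial N_{K}$, the inclusion $H_{K}\subseteq\mathrm{Stab}(\widehat{s}_{K})$ by absorbing the peripheral loop into the free initial endpoint, and the reverse inclusion by tracking the loop traced out in $\partial N_{K}$ by the initial point of the homotopy from $s_{K}\cdot g$ to $s_{K}$. The extra care you flag in the last step (converting the square-filling homotopy into $s_{K}\cdot g\simeq\delta\cdot s_{K}$ rel endpoints) is exactly the point the paper handles by observing that $\overline{s}_{K}\cdot\overline{n}\cdot s_{K}\cdot g$ is null-homotopic.
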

\begin{proof} Choose two paths $a$ and $b$ which represent the homotopy classes $\widehat{a},\widehat{b}\in Q(K)$. Choose a path $n\subset  \partial N_{K}$ from $a(0)$ to $b(0)$ and let $\widehat{g}=[\overline{a}\cdot n\cdot b]$. Then we have $\widehat{a}^{\widehat{g}}=[a\cdot \overline{a}\cdot n\cdot b]=[n\cdot b]=\widehat{b}$.  

By the definition of $H_{K}$, each element $\widehat{h}\in H_{K}$ has the form $\widehat{h}=[\overline{s}_{K}\cdot n\cdot s_{K}]$ for some loop $n$ in $\partial N_{K}$ starting and ending at $z'_{K}$. Thus, $\widehat{s}_{K}^{\widehat{h}}=\widehat{s}_{K}$ and $H_{K}$ acts trivially on $\widehat{s}_{K}$. On the other hand, if $\widehat{s}_{K}^{\widehat{g}}=\widehat{s}_{K}$ for some $\widehat{g}\in G_{K}$, then there is a homotopy $H(t,u)$ between the paths $H(0,u)=s_{K}\cdot g$ and $H(1,u)=s_{K}$. Let $H(t,0)=n\subset \partial N_{K}$ be the path, traced by the starting point of the path $s_{K}\cdot g$ under this homotopy. Then the path $\overline{s}_{K}\cdot \overline{n}\cdot s_{K}\cdot g$ is homotopic to a constant, which means that $\widehat{g}=[\overline{s}_{K}\cdot n\cdot s_{K}]$ lies in $H_{K}$.  
\end{proof} 

\begin{example}\label{ex3}\textbf{The presentation of the fundamental quandle of a knot.} 
Let $K$ be a knot, given by a knot diagram $D_{K}$. Label the arcs of $D_{K}$ by $y_{1},\ldots ,y_{n}$. Each crossing of the diagram consists of an overcrossing arc $y_{j}$ and two arcs $y_{i}$ and $y_{k}$ of the undercrossing strand. If we see the arc $y_{i}$ on the right (and $y_{k}$ on the left) of $y_{j}$ when passing along $y_{j}$, then we define the crossing relation $y_{i}\tr y_{j}=y_{k}$. Define a quandle $$\Gamma \langle y_{1},\ldots ,y_{n}|\, \textrm{crossing relations of $D_{K}$}\rangle \;.$$ It can be shown that $\Gamma \langle y_{1},\ldots ,y_{n}|\, \textrm{crossing relations of $D_{K}$}\rangle $ is isomorphic to the fundamental quandle $Q(K)$ \cite{MAN}. Here we offer a brief discussion of this correspondence. 

For each arc $y_{i}$ of the diagram $D_{K}$, choose a path $x_{i}$ from a point on $\partial N_{K}$, corresponding to the arc $y_{i}$, to the basepoint. Let $x_{i}$ be such that wherever its projection intersects the diagram $D_{K}$, it goes over the knot $K$. The homotopy classes of the paths $x_{1},\ldots ,x_{n}$ represent the elements of the set $\Gamma _{K}$. 
\begin{figure}[h]
\labellist
\normalsize \hair 2pt
\pinlabel $x_{k}$ at 260 0
\pinlabel $\partial (x_{j})$ at 410 100
\pinlabel $x_{i}$ at 450 220
\pinlabel $y_{k}$ at 80 120
\pinlabel $y_{j}$ at 170 320
\pinlabel $y_{i}$ at 350 380
\pinlabel $*$ at 530 10
\endlabellist
\begin{center}
\includegraphics[scale=0.3]{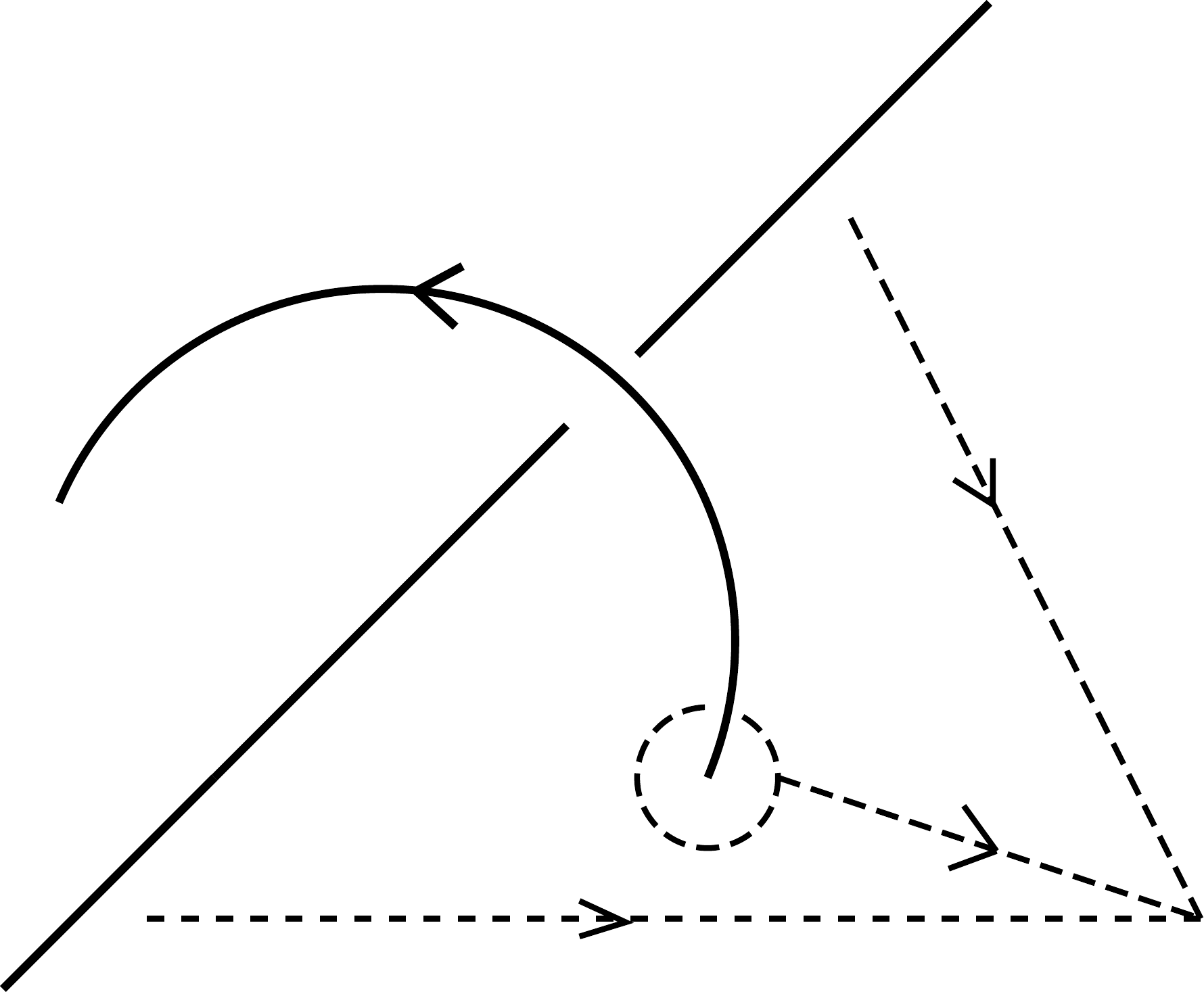}
\caption{The illustration of the crossing relation}
\label{fig:relation}
\end{center}
\end{figure}
The augmentation map $\partial \colon \Gamma _{K}\to G_{K}$ is given by $\partial (\wh{x}_{i})=[\overline{x}_{i}\cdot m_{x_{i}(0)}\cdot x_{i}]$ and the fundamental group $G_{K}$ is actually generated by the images $\partial (\wh{x}_{1}),\ldots ,\partial (\wh{x}_{n})$.
Consider a crossing of $D_{K}$ with the overcrossing arc $y_{j}$ and the undercrossing arcs $y_{i}$ and $y_{k}$. If we see the arc $y_{i}$ on the right (and $y_{k}$ on the left) of $y_{j}$ when passing along $y_{j}$, then we may see the homotopy between $x_{i}\cdot (\overline{x}_{j}\cdot m_{x_{j}(0)}\cdot x_{j})$ and $x_{k}$ in the Figure \ref{fig:relation}. This homotopy implies the crossing relation $\wh{x}_{i}\tr \wh{x}_{j}=\wh{x}_{k}$ in the fundamental quandle $Q(K)$.  
\end{example}

\begin{lemma} \label{lemma5} The fundamental group $G_{K}$ is the associated group $As(Q(K))$ of the fundamental quandle of the knot $K$. 
\end{lemma}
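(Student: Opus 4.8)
The plan is to construct mutually inverse group homomorphisms between $As(Q(K))$ and $G_K$. First I would note that the augmentation map $\partial\colon Q(K)\to (G_K)_{conj}$ is a quandle homomorphism: by the augmentation conditions of Definition \ref{def2}, $\partial(\widehat a\tr\widehat b)=\partial(\widehat a^{\,\partial\widehat b})=(\partial\widehat b)^{-1}(\partial\widehat a)(\partial\widehat b)=\partial\widehat a\tr\partial\widehat b$ in $(G_K)_{conj}$. Lemma \ref{lemma3} then yields a group homomorphism $\partial_{\#}\colon As(Q(K))\to G_K$ with $\partial_{\#}\circ\eta=\partial$, where $\eta\colon Q(K)\to As(Q(K))$ is the natural map. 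This map is surjective: fixing a diagram $D_K$ with arcs $y_1,\dots,y_n$ as in Example \ref{ex3}, the elements $\partial(\widehat x_i)=\partial_{\#}(\eta(\widehat x_i))$ are exactly the Wirtinger generators of $G_K$, hence they generate $G_K$.

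To build the inverse I would use the Wirtinger presentation of $G_K$ together with the diagrammatic quandle presentation $Q(K)\cong\Gamma\langle y_1,\dots,y_n\mid\text{crossing relations of }D_K\rangle$ from Example \ref{ex3}. In $As(Q(K))$ the defining relations force $\eta(\widehat a\tr\widehat b)=\eta(\widehat b)^{-1}\eta(\widehat a)\eta(\widehat b)$, so each crossing relation $\widehat x_i\tr\widehat x_j=\widehat x_k$ translates into the Wirtinger relation $\eta(\widehat x_k)=\eta(\widehat x_j)^{-1}\eta(\widehat x_i)\eta(\widehat x_j)$ among the elements $\eta(\widehat x_1),\dots,\eta(\widehat x_n)$. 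Hence the assignment $g_i\mapsto\eta(\widehat x_i)$ on Wirtinger generators kills every Wirtinger relator and extends to a group homomorphism $\psi\colon G_K\to As(Q(K))$.

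It then remains to check that $\partial_{\#}$ and $\psi$ are inverse. On generators $\partial_{\#}(\psi(g_i))=\partial_{\#}(\eta(\widehat x_i))=\partial(\widehat x_i)=g_i$, so $\partial_{\#}\circ\psi=\mathrm{id}_{G_K}$. For the other composite, I would use that via the presentation of Example \ref{ex3} every element of $Q(K)$ is obtained from $\widehat x_1,\dots,\widehat x_n$ by the operations $\tr,\tl$, so its $\eta$-image is a product of conjugates of the $\eta(\widehat x_i)^{\pm 1}$; thus $\eta(\widehat x_1),\dots,\eta(\widehat x_n)$ generate $As(Q(K))$, and since $\psi(\partial_{\#}(\eta(\widehat x_i)))=\psi(g_i)=\eta(\widehat x_i)$ we get $\psi\circ\partial_{\#}=\mathrm{id}$. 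I expect the main point requiring care to be exactly this last generation step: justifying that the finite diagrammatic data of Example \ref{ex3} faithfully captures $As(Q(K))$, which is where the identification of the fundamental quandle with its diagrammatic presentation does the real work.
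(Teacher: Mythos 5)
Your proof is correct and takes essentially the same route as the paper's: both identify the crossing relations of the diagrammatic presentation of $Q(K)$ (Example \ref{ex3}) with the Wirtinger relations of $G_K$ via the augmentation map $\partial$. Your version is simply more explicit, constructing the mutually inverse homomorphisms $\partial_{\#}$ and $\psi$ and checking generation, where the paper's terser argument just compares the two presentations directly.
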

\begin{proof} Recall the Wirtinger presentation of the fundamental group of a knot. Using notation from the Example \ref{ex3}, the fundamental group $G_{K}$ is generated by the homotopy classes $\partial (\wh{x}_{1}),\ldots ,\partial (\wh{x}_{n})$. Therefore, the augmentation map $\partial \colon \Gamma _{K}\to G_{K}$ is a bijection between the generating sets of $Q(K)$ and $G_{K}$. In the associated group $As(Q(K))$, every crossing relation $\wh{x}_{i}\tr \wh{x}_{j}=\wh{x}_{k}$ is equivalent to the relation $\wh{x}_{i}=\wh{x}_{j}\wh{x}_{k}\wh{x}_{j}^{-1}$, which yields the Wirtinger relation $\partial (\wh{x}_{i})=\partial (\wh{x}_{j})\partial (\wh{x}_{k})\partial (\wh{x}_{j})^{-1}$ of this crossing in the fundamental group $G_{K}$.  
\end{proof}

\begin{remark} \label{rem3} Let $F\colon Q(K)\to Q(K)$ be a quandle automorphism of the fundamental quandle. We may specify how $F$ interfers with the action of $G_{K}$, defined above. Let $\widehat{a},\widehat{b}$ be two elements of $Q(K)$, represented by the paths $a$ and $b$. Let $\widehat{g}\in G_{K}$ be an element of the fundamental group. By the Lemma \ref{lemma5}, the element $\widehat{g}$ may be represented as $\widehat{g}=\partial \widehat{c}$ for some $\widehat{c}\in Q(K)$. By the Lemma \ref{lemma3}, the automorphism $F$ induces a unique group isomorphism $F_{\# }\colon G_{K}\to G_{K}$, for which $\partial F(\widehat{c})=F_{\# }(\widehat{g})$. Then we have
\[ F(\widehat{a}^{\widehat{g}})=F(\widehat{a}^{\partial \widehat{c}})=F(\widehat{a}\tr \widehat{c})=F(\widehat{a})\tr F(\widehat{c})=F(\widehat{a})^{\partial F(\widehat{c})}=F(\widehat{a})^{F_{\# }(\widehat{g})}\;.
\] 
\end{remark}

\end{subsection}
\end{section}

\begin{section}{Knot symmetries and the automorphisms of the knot quandle}\label{sec3}

Recall that the symmetry group of a knot $K$ in $S^{3}$ is the mapping class group of the pair $(S^{3},K)$, meaning the fourth term in the short exact sequence $$1\rightarrow Aut_{0}(S^{3},K)\rightarrow Aut(S^{3},K)\rightarrow MCG(S^{3},K)\rightarrow 1\;,$$ where $Aut(S^{3},K)$ denotes the group of homeomorphisms of the pair $(S^{3},K)$ and $Aut_{0}(S^{3},K)$ denotes the normal subgroup of those homeomorphisms which are isotopic to the identity. According to Hoste et al. \cite{HO}, there are four types of homeomorphisms in $Aut(S^{3},K)$: 
\begin{enumerate}
\item those which preserve the orientations of $K$ and of $S^{3}$,
\item those which reverse the orientation of $K$ and preserve the orientation of $S^{3}$,
\item those which preserve the orientation of $K$ and reverse the orientation of $S^{3}$,
\item those which reverse the orientation of $K$ and of $S^{3}$.
\end{enumerate}
For an oriented knot $K$ in $S^{3}$, denote by $rK$ the same knot with the opposite orientation, by $mK$ the mirror image of $K$ and by $rmK$ the mirror image of $K$ with the opposite orientation. 

\begin{center}
\begin{tabular}{|c|c|c|c|}
\hline 
\textbf{class} & \textbf{symmetries} & \textbf{knot symmetries} & \textbf{knot equivalences}\\
\hline
c & (1) & chiral, noninvertible & \quad \\
\hline
+ & (1), (3) & + amphichiral, noninvertible & $K=mK$\\ 
\hline
\-- & (1), (4) & - amphichiral, noninvertible & $K=rmK$\\
\hline
i & (1), (2) & chiral, invertible & $K=rK$\\
\hline
a & (1), (2), (3), (4) & + and - amphichiral, invertible & $K=rK=mK=rmK$\\
\hline
\end{tabular}
\end{center}

\begin{proposition} \label{prop1} Let $f\colon (S^{3},K)\to (S^{3},K)$ be a homeomorphism for which $[f|_{\partial N_{K}}]=\pm 1\in MCG(T^{2})$. Then $f$ induces a map $f_{*}\colon Q(K)\to Q(K)$ that is either a quandle automorphism or a quandle antiautomorphism.
\end{proposition}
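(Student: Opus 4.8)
The plan is to track what a homeomorphism $f$ of the pair $(S^3,K)$ does to the geometric data defining $Q(K)$, namely homotopy classes of paths in $E_K$ running from $\partial N_K$ to the basepoint. Since $f$ restricts to a homeomorphism $E_K\to E_K$, it carries such paths to such paths; the only issue is that $f$ need not fix the basepoint $z_K$, the auxiliary point $z_K'$, or the path $s_K$, so I first fix a path $t$ in $E_K$ from $f(z_K)$ back to $z_K$ and define $f_*(\widehat a)=[\,f\circ a \cdot t\,]$ on $\Gamma_K$. One checks this is well defined on homotopy classes (it is, since $f$ and concatenation with a fixed path respect homotopy rel endpoints, with the initial point free to move on $\partial N_K$). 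This $f_*$ is a bijection of $\Gamma_K$, with inverse induced by $f^{-1}$ and $\overline{t}$ suitably adjusted.

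Next I would compute how $f_*$ interacts with the quandle operation $\widehat a\tr\widehat b=[\,a\cdot\overline b\cdot m_{b(0)}\cdot b\,]$ from Definition~\ref{def5}. Applying $f$ to the representing path, $f_*(\widehat a\tr\widehat b)$ is represented by $f\circ a\cdot \overline{f\circ b}\cdot f(m_{b(0)})\cdot f\circ b\cdot t$, and after inserting $\overline t\cdot t$ in the middle this becomes $(f\circ a\cdot t)\cdot(\overline{f\circ b\cdot t})\cdot \bigl(\overline t\cdot f(m_{b(0)})\cdot t\bigr)\cdot(f\circ b\cdot t)$. So everything reduces to understanding the loop $\overline t\cdot f(m_{b(0)})\cdot t$ on $\partial N_K$ pushed to the basepoint: it is $f$ applied to a meridian loop. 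The hypothesis $[f|_{\partial N_K}]=\pm 1\in MCG(T^2)$ is exactly what controls this: up to isotopy on the torus, $f|_{\partial N_K}$ sends the meridian class $m$ to $\pm m$ (the hypothesis being $\pm\mathrm{id}$, the meridian goes to $m$ or $m^{-1}$, since the $\pm$ on the torus mapping class acts as $\pm 1$ on $H_1$ and hence on the meridian generator up to conjugation). When $f(m_{b(0)})$ represents the meridian in the positive direction, comparing with the formula for $\tr$ shows $f_*(\widehat a\tr\widehat b)=f_*(\widehat a)\tr f_*(\widehat b)$, i.e.\ $f_*$ is a quandle homomorphism; when it represents the meridian reversed, comparing with the formula for $\tl$ shows $f_*(\widehat a\tr\widehat b)=f_*(\widehat a)\tl f_*(\widehat b)$, i.e.\ $f_*$ is a quandle antihomomorphism. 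Combined with bijectivity, $f_*\in Aut^C(Q(K))$ by Lemma~\ref{lemma2}.

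The main obstacle I anticipate is the bookkeeping around basepoints and the auxiliary path $t$: making precise that the conjugating loop $\overline t\cdot f(m_{b(0)})\cdot t$ really is a meridian (not some other peripheral element) requires care, because $f$ permutes the meridian circles of the normal bundle and one must use that $f$ is a homeomorphism of the pair to see that meridian disks go to meridian disks, so $f(m_{b(0)})$ is freely homotopic in $\partial N_K$ to $m_{f(b(0))}^{\pm 1}$, with the sign uniform over all of $\partial N_K$ precisely because $[f|_{\partial N_K}]=\pm 1$ in $MCG(T^2)$. A secondary point to verify is that the sign is genuinely uniform (it cannot be $+$ near one point and $-$ near another), which again follows from connectedness of $T^2$ and the fact that a single mapping class acts on $H_1(T^2)$ by a single matrix. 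Once the meridian is correctly identified, the algebraic comparison with Definition~\ref{def5} is a short computation, and the statement follows.
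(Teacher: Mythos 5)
Your proposal is correct and follows essentially the same route as the paper: push paths forward by $f$, and use the hypothesis $[f|_{\partial N_K}]=\pm 1$ to identify $f(m_{b(0)})$ with $m_{f(b(0))}$ or $\overline{m}_{f(b(0))}$, which turns $\tr$ into $\tr$ or $\tl$ and yields a bijective (anti)homomorphism. The only difference is that the paper simply takes $f$ to fix the basepoint $z_{K}$ (writing it as a homeomorphism of the triple $(S^{3},K,z_{K})$) while you insert an auxiliary path $t$ from $f(z_{K})$ to $z_{K}$; in your displayed decomposition the middle factor should then be just $f(m_{b(0)})$ rather than $\overline{t}\cdot f(m_{b(0)})\cdot t$ (the latter does not even concatenate, and the $t$'s are already absorbed into the adjacent factors $\overline{f\circ b\cdot t}$ and $f\circ b\cdot t$), but this is a bookkeeping slip rather than a gap.
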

\begin{proof} Let $f\colon (S^{3},K,z_{K})\to (S^{3},K,z_{K})$ be a homeomorphism. The underlying set $\Gamma _{K}$ of the fundamental quandle $Q(K)$ is actually the relative homotopy group $\pi _{1}(E_{K},\partial N_{K}\cup \{z_{K}\},z_{K})$. Since $f(K)=K$, it follows that $f(E_{K})\cong E_{K}$ and $f(\partial N_{K})\cong \partial N_{K}$, thus $f$ induces a map 
$$f_{*}\colon \Gamma _{K}=\pi _{1}(E_{K},\partial N_{K}\cup \{z_{K}\},z_{K})\to \pi _{1}(f(E_{K}),f(\partial N_{K})\cup \{f(z_{K})\},f(z_{K}))\cong \Gamma _{K}$$
on the relative homotopy group. Define $f_{*}(\wh {a})=[f\circ a]$, where $\wh{a}=[a]$. If $a$ and $a'$ are two different representatives of the class $\wh{a}\in \Gamma _{K}$, then there exists a homotopy $h_{t}\colon [0,1]\to E_{K}$ such that $h_{0}=a$ and $h_{1}=a'$. Then $f\circ h_{t}$ is a homotopy from $f_{*}(\wh{a})$ to $f_{*}(\wh{a'})$, thus $f_{*}$ is a well defined map on $\Gamma _{K}$. 

Let $\wh{a},\wh{b}\in \Gamma _{K}$ be represented by the respective paths $a$ and $b$. The quandle operation on $Q(K)$ is defined by $\wh{a}\tr \wh{b}=[a\cdot \overline{b}\cdot m_{b(0)}\cdot b]$, thus we have $$f_{*}(\wh{a}\tr \wh{b})=[f(a\cdot \overline{b}\cdot m_{b(0)}\cdot b)]=[f(a)\cdot f(\overline{b})\cdot f(m_{b(0)})\cdot f(b)]\;.$$ Now $f(m_{b(0)})$ is a loop in $f(\partial N_{K})$ based at $f(b(0))$. Since the restriction $f|_{\partial N_{K}}$ is isotopic to $\pm $ identity, the loop $f(m_{b(0)})$ also represents a meridian circle of $K$, which may be either $m_{f(b(0))}$ or $\overline{m}_{f(b(0))}$. It follows that 
\[ f_{*}(\wh{a}\tr \wh{b})=\begin{cases} f_{*}(\wh{a})\tr f_{*}(\wh{b}) &\textit{if f preserves the orientation of the normal bundle,}  \\
f_{*}(\wh{a})\tl f_{*}(\wh{b}) & \textit{if f reverses the orientation of the normal bundle,} \end{cases} \]
therefore the induced map $f_{*}\colon Q(K)\to Q(K)$ is a quandle (anti)homomorphism. 

Since $f$ is surjective, the induced map $f_{*}$ is a surjective homomorphism on $Q(K)$. To show the injectivity of $f_{*}$, let $f_{*}(\wh{a})=f_{*}(\wh{b})$ for two elements $\wh{a},\wh{b}\in Q(K)$. Choosing representatives $a$ and $b$ of the respective classes $\wh{a}$ and $\wh{b}$, there exists a homotopy $h_{t}\colon [0,1]\to E_{K}$ for which $h_{0}=f\circ a$ and $h_{1}=f\circ b$. Then $g_{t}=f^{-1}\circ h_{t}$ is a homotopy from $a$ to $b$, thus $\wh{a}=\wh{b}\in Q(K)$.   
\end{proof}

\begin{example} \label{ex4} Consider the knot $5_1$, given by the diagram of the Figure \ref{fig:5_1}. It is known that the knot $5_{1}$ is 5-periodic, thus there exists a nontrivial homeomorphism $f\colon (S^{3},5_1)\to (S^{3},5_1)$ of order 5. Let us find the corresponding quandle automorphism. The fundamental quandle has a presentation:
\[ Q(5_{1})=\left <a,b,c,d,e|\, b\tr a=e, c\tr b=a, d\tr c=b, e\tr d=c, a\tr e=d\right >\;.\] Consider the map $F\colon Q(5_1)\to Q(5_1)$, given by $F(a)=b$, $F(b)=c$, $F(c)=d$, $F(d)=e$ and $F(e)=a$. Since $F$ preserves the crossing relations of the above presentation, it defines a quandle automorphism of order 5. 
\begin{figure}[h]
\labellist
\normalsize \hair 2pt
\pinlabel $a$ at 170 40
\pinlabel $b$ at 185 110
\pinlabel $c$ at 110 170 
\pinlabel $d$ at -5 100
\pinlabel $e$ at 30 20
\endlabellist
\begin{center}
\includegraphics[scale=0.7]{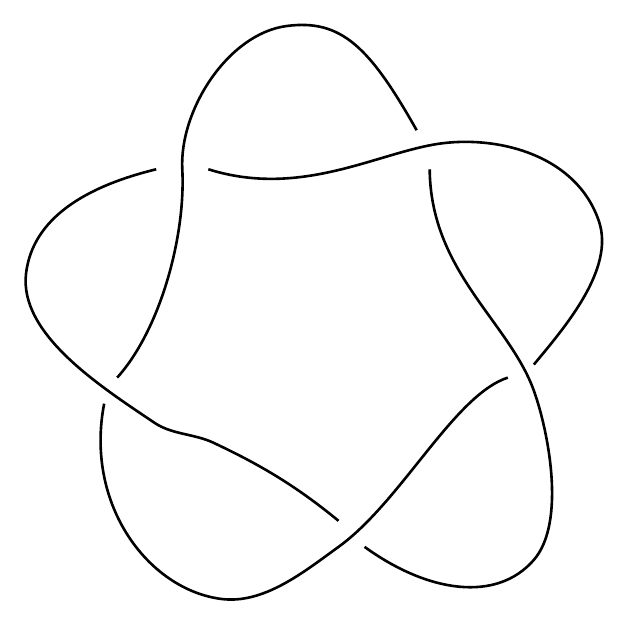}
\caption{The knot $5_1$}
\label{fig:5_1}
\end{center}
\end{figure}
\end{example}

By the Proposition \ref{prop1}, there is a subgroup of the automorphism group $Aut(S^{3},K)$ which acts on the fundamental quandle $Q(K)$ by quandle (anti)automorphisms. Every knot symmetry that preserves the peripheral structure induces a quandle (anti)automorphism. In the following, we offer some kind of a reverse to this correspondence. We need the important result of Waldhausen \cite{WA}. For the definition of an incompressible surface, we refer the reader to \cite[page 58]{WA}. 

\begin{definition} \label{def7} Let $M$ be a compact connected 3-manifold. 

$M$ is called \textbf{irreducible} if any smooth submanifold $S\subset M$ homeomorphic to a sphere bounds a subset $D$ which is homeomorphic to the closed 3-ball.

An irreducible manifold $M$, which is not a 3-ball, is called \textbf{sufficiently large} if it contains an incompressible surface. 

$M$ is called \textbf{boundary irreducible} if its boundary $\partial M$ is incompressible. 
\end{definition}

\begin{Theorem}[Waldhausen, Corollary 6.5 \cite{WA}] \label{th1} Suppose that $M$ and $N$ are irreducible and boundary irreducible. Let $M$ be sufficiently large and let $\psi \colon \pi _{1}(N)\to \pi _{1}(M)$ be an isomorphism preserving the peripheral structure. Then there exists a homeomorphism $f\colon N\to M$ inducing $\psi $.  
\end{Theorem}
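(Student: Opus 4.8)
\emph{Proof proposal.} Since this is a genuinely deep theorem of Waldhausen, I will only outline the structure of a proof. The starting observation is that a sufficiently large manifold $M$ is aspherical: it is irreducible with infinite fundamental group, so by the sphere theorem its universal cover is contractible and $M=K(\pi_1 M,1)$. Consequently the isomorphism $\psi$ is realized by a continuous map $f\colon N\to M$, unique up to homotopy, and running the same argument with $\psi^{-1}$ shows $f$ is a homotopy equivalence. Because $\psi$ preserves the peripheral structure, one may homotope $f$ so that $f(\partial N)\subset\partial M$ and $f$ restricts on each component of $\partial N$ to a $\pi_1$-isomorphism onto a component of $\partial M$.

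The proof then proceeds by induction along a hierarchy for the Haken manifold $M$: a finite sequence $M=M_0,M_1,\dots,M_k$ in which $M_{i+1}$ is obtained by cutting $M_i$ along a two-sided, properly embedded, incompressible and boundary-incompressible surface $S_i$, with $M_k$ a disjoint union of $3$-balls (such a hierarchy exists and has bounded length by Haken's finiteness theorem). Given the homotopy equivalence $f\colon N\to M$ preserving peripheral structure, one makes $f$ transverse to $S_0$ and, after a homotopy, arranges that $T_0=f^{-1}(S_0)$ is nonempty (using that $f$ is a $\pi_1$-isomorphism and $S_0$ is incompressible and not boundary-parallel); in particular $N$ is itself Haken. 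The technical heart is then to simplify $T_0$: using the incompressibility and boundary-incompressibility of $S_0$ together with the loop theorem and Dehn's lemma, one kills trivial sphere and disk components and all compressions, so that $T_0$ becomes incompressible and $f$ is $\pi_1$-injective on each of its components; a further normalization (isotoping $f$ and discarding redundant parallel copies) yields a single incompressible surface $T_0$ with $f|_{T_0}\colon T_0\to S_0$ a homotopy equivalence. Cutting along $T_0$ and $S_0$, the map $f$ descends to a homotopy equivalence $N|_{T_0}\to M|_{S_0}$ (by van Kampen and Mayer--Vietoris) preserving the new peripheral structure, and $M|_{S_0}$ is again irreducible and boundary-irreducible; by the inductive hypothesis there is a homeomorphism of the cut manifolds inducing the relevant $\pi_1$-isomorphism. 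The base case is a union of $3$-balls, where a homeomorphism of the $2$-sphere boundaries extends over the balls by Alexander's trick.

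The remaining step, and the one I expect to be the real obstacle, is reassembly: the homeomorphisms of the pieces must be adjusted so as to agree on the cutting surfaces before they can be glued back to a homeomorphism $N\to M$ inducing $\psi$. This requires the rigidity inputs underlying Waldhausen's paper as a whole --- for surfaces, a homotopy equivalence is homotopic to a homeomorphism and homotopic homeomorphisms are isotopic (Nielsen, Dehn--Nielsen--Baer and its bounded analogue), and for Haken $3$-manifolds, homotopic homeomorphisms are isotopic. Using these, one isotopes the homeomorphisms of the pieces of $N$ until along each $T_i$ they match the prescribed identifications, after which they assemble to the desired homeomorphism $f\colon N\to M$. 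The delicate points are thus (i) turning the preimage $f^{-1}(S_0)$ into a single incompressible surface over which $f$ is a homotopy equivalence, and (ii) controlling the gluing maps in the reassembly, which is exactly where the surface-level and $3$-manifold-level uniqueness-up-to-isotopy results are indispensable.
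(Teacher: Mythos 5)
This statement is not proved in the paper at all: it is Waldhausen's Corollary 6.5, imported verbatim with a citation to \cite{WA}, and the paper only uses it as a black box (in the proof of Theorem \ref{th2}). So there is no ``paper's own proof'' to compare against; the relevant question is whether your outline faithfully reflects Waldhausen's argument, and it does. The skeleton you give --- asphericity of $M$ and $N$ via irreducibility, the sphere theorem and infinite $\pi_1$; realization of $\psi$ by a homotopy equivalence of pairs using the peripheral hypothesis; induction along a Haken hierarchy, with the technical core being the normalization of $f^{-1}(S_0)$ into a single incompressible surface on which $f$ is a homotopy equivalence; and the rigidity inputs (homotopy implies isotopy for surfaces and for Haken manifolds) --- is the standard and correct account. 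One presentational remark: Waldhausen largely avoids the separate ``reassembly'' step you flag as the main obstacle by proving the stronger relative statement (his Theorem 6.1): a homotopy equivalence of pairs that is already a homeomorphism on a prescribed subsurface of the boundary is homotopic, rel that subsurface, to a homeomorphism. With that formulation the induction hands each piece a map that is already a homeomorphism where the previous cuts were made, so the gluing is built into the inductive hypothesis rather than performed at the end. Your proposal is an outline rather than a proof --- the hard content of Waldhausen's sections 1--5 is named but not supplied --- which is entirely appropriate for a deep cited theorem, but it should be presented as such and not as a self-contained argument.
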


In his pioneering work on quandles (which he called \textit{distributive grupoids}), Sergei Matveev proved the fact that the knot quandle is a complete knot invariant. Both our Theorem \ref{th3} and its proof are actually an adapted form of the Matveev's theorem \cite[Theorem 2]{MA}.   

\begin{Theorem} \label{th2} Let $F\colon Q(K_{1})\to Q(K_{2})$ be an isomorphism of the quandles of two nontrivial knots $K_{1}$ and $K_{2}$. Then there exists a homeomorphism $f\colon (S^{3},K_{1})\to (S^{3},K_{2})$ and some $g\in G_{K_{2}}$ such that $f_{*}=F^{g}$. The homeomorphism $f$ preserves the orientation of the normal bundle of $K_{1}$. 
\end{Theorem}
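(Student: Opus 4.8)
The plan is to reduce Theorem \ref{th2} to Waldhausen's Theorem \ref{th1} applied to the knot exteriors $E_{K_1}$ and $E_{K_2}$. First I would invoke Lemma \ref{lemma5}, which identifies $G_{K_i}$ with the associated group $\mathrm{As}(Q(K_i))$, so that the quandle isomorphism $F\colon Q(K_1)\to Q(K_2)$ induces (via the functoriality of $\mathrm{As}$, i.e. Lemma \ref{lemma3} applied in both directions) a group isomorphism $F_{\#}\colon G_{K_1}\to G_{K_2}$. For nontrivial knots, the exteriors $E_{K_i}$ are irreducible, boundary irreducible, and sufficiently large (the incompressible Seifert surface; irreducibility and boundary-irreducibility are standard facts about knot exteriors, true precisely because $K_i$ is nontrivial), so the hypotheses of Theorem \ref{th1} are met once one checks that $F_{\#}$ preserves the peripheral structure. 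That check is the crux: I would use Lemma \ref{lemma4}, which says $G_{K_i}$ acts transitively on $Q(K_i)$ with the stabilizer of the distinguished element $\widehat{s}_{K_i}$ equal to the peripheral subgroup $H_{K_i}$. Since $F$ is a quandle isomorphism, it is equivariant with respect to $F_{\#}$ in the sense of Remark \ref{rem3} (i.e. $F(\widehat a^{\widehat g})=F(\widehat a)^{F_{\#}(\widehat g)}$), so $F_{\#}$ carries the stabilizer of $\widehat{s}_{K_1}$ to the stabilizer of $F(\widehat{s}_{K_1})$; because the action is transitive, $F(\widehat{s}_{K_1})=\widehat{s}_{K_2}^{\widehat h}$ for some $\widehat h\in G_{K_2}$, whence $F_{\#}(H_{K_1})=\widehat h^{-1}H_{K_2}\widehat h$, a conjugate of the peripheral subgroup — which is exactly what ``preserving the peripheral structure'' means.

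Having this, Theorem \ref{th1} produces a homeomorphism $f\colon E_{K_1}\to E_{K_2}$ with $f_*=\psi$, where $\psi$ is the map obtained from $F_{\#}$ after correcting by the element $\widehat h$ above (one conjugates $F_{\#}$ by $\widehat h$, or equivalently replaces $F$ by $F^{\widehat h}$ in the statement — this is the origin of the ``$g\in G_{K_2}$'' and the ``$F^g$'' appearing in the theorem). Extending $f$ over the tubular neighborhoods $N_{K_i}$ by coning, or rather by the standard fact that a homeomorphism of knot exteriors extends to a homeomorphism of the pairs $(S^3,K_i)$ (this uses that $\partial N_{K_i}$ is an incompressible torus and that meridians are detected group-theoretically, so $f$ can be arranged to send meridian to meridian), gives a homeomorphism $f\colon (S^3,K_1)\to (S^3,K_2)$.

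It remains to identify the induced quandle map with $F^{\widehat g}$ and to determine the effect on the normal-bundle orientation. For the former, $f_*\colon Q(K_1)\to Q(K_2)$ and $F^{\widehat g}$ induce the same map $G_{K_1}\to G_{K_2}$ by construction and both are quandle isomorphisms; since $\widehat{s}_{K_1}$ generates $Q(K_1)$ under the $G_{K_1}$-action (Lemma \ref{lemma4}) and the two maps agree on $\widehat{s}_{K_1}$ and are equivariant in the same way, they agree everywhere. For the normal bundle: by Proposition \ref{prop1}, $f_*$ is a genuine quandle homomorphism (not an antihomomorphism) precisely when $f$ preserves the orientation of the normal bundle; since $f_*=F^{\widehat g}$ is an honest isomorphism of quandles (as $F$ is), $f$ must preserve that orientation. (If $f$ reversed it, Proposition \ref{prop1} together with Remark \ref{rem2} would force $Q(K_1)\cong Q(K_2)^d$ via $f_*$, contradicting that $F$ is an isomorphism rather than an antiisomorphism — here one uses that for a nontrivial knot $Q(K)$ is not isomorphic to its dual via this particular map, or more simply that ``homomorphism'' and ``antihomomorphism'' are mutually exclusive for a map known to be one of them on a presentation with at least one genuine crossing relation.)

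I expect the main obstacle to be the extension step: carefully arguing that the exterior homeomorphism extends to the pair while controlling the behavior on $\partial N_{K}$, in particular tracking how meridians and the chosen basepoint data interact so that the resulting $f_*$ really is the stated $F^{\widehat g}$ on the nose rather than merely up to inner automorphism. The bookkeeping with basepoints, the path $s_K$, and the element $\widehat g\in G_{K_2}$ that absorbs the conjugation ambiguity is the delicate part; the rest is a fairly direct application of Waldhausen's theorem combined with the lemmas already established.
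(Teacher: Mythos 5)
Your proposal follows essentially the same route as the paper: conjugate $F$ by an element of $G_{K_2}$ so that the distinguished element $\widehat{s}_{K_1}$ maps to $\widehat{s}_{K_2}$, use Lemmas \ref{lemma3}--\ref{lemma5} to obtain a peripheral-structure-preserving isomorphism of knot groups, apply Waldhausen's Theorem \ref{th1}, and extend the exterior homeomorphism over the tubular neighborhood. The only cosmetic differences are that the paper identifies $f_*$ with $F^{g}$ via the injectivity of $\partial$ (Lemma \ref{lemmax}) rather than by an equivariance argument, and reads off both the extendability over $N_{K}$ and the normal-bundle orientation directly from $\phi(m_{K_1})=m_{K_2}$ rather than appealing to Proposition \ref{prop1}.
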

\begin{proof} Denote by $E_{K_{i}}=S^{3}-\textrm{Int}(N_{K_{i}})$ the complement of a regular neighbourhood of $K_{i}$. The manifold $E_{K_{i}}$ is irreducible and sufficiently large. Since $K_{i}$ is nontrivial, $E_{K_{i}}$ is also boundary irreducible. Remember that $z_{K_{i}}\in E_{K_{i}}, z'_{K_{i}}\in \partial N_{K_{i}}$ are the chosen basepoints and $s_{K_{i}}$ is the chosen path in $E_{K_{i}}$ from $z'_{K_{i}}$ to $z_{K_{i}}$. The action of $G_{K_{2}}$ on $Q(K_{2})$ is transitive by the Lemma \ref{lemma4}; thus there exists a $g\in G_{K_{2}}$ such that $F(\widehat{s}_{K_{1}})^{g}=\widehat{s}_{K_{2}}$. Define an isomorphism $\widehat{F}\colon Q(K_{1})\to Q(K_{2})$ by $\widehat{F}(a)=F(a)^{g}$ and it follows that $\widehat{F}(\widehat{s}_{K_{1}})=\widehat{s}_{K_{2}}$. By the Lemma \ref{lemma5}, the group $G_{K_{i}}$ is associated to $Q(K_{i})$ . Therefore, the isomorphism $\widehat{F}\colon Q(K_{1})\to Q(K_{2})$ induces a unique isomorphism $\phi \colon G_{K_{1}}\to G_{K_{2}}$ by the Lemma \ref{lemma3}. By the Lemma \ref{lemma4}, the peripheral subgroup $H_{K_{i}}\leq G_{K_{i}}$ equals the stabilizer subgroup of the element $\widehat{s}_{K_{i}}$, which is the same as the centralizer subgroup of $\partial \wh{s}_{K_{i}}=m_{K_{i}}$. Since $\widehat{F}(\widehat{s}_{K_{1}})=\widehat{s}_{K_{2}}$, the restriction $\phi |_{H_{K_{1}}}$ is an isomorphism of $H_{K_{1}}$ to $H_{K_{2}}$ and $\phi (m_{K_{1}})=m_{K_{2}}$. Thus $\phi \colon G_{K_{1}}\to G_{K_{2}}$ is an isomorphism, preserving the peripheral structure, and by the Waldhausen's theorem \ref{th1} there exists a homeomorphism $\overline{f}\colon E_{K_{1}}\to E_{K_{2}}$ inducing $\phi $. Since $\phi (m_{K_{1}})=m_{K_{2}}$, the map $\overline{f}$ extends to a homeomorphism $f\colon (S^{3},K_{1})\to (S^{3},K_{2})$ which preserves the orientation of the normal bundle. We have
\begin{xalignat*}{1}
& \partial \wh{F}(\wh{a})=\phi (\partial \wh{a})=[f\circ \partial a]=[f\circ \overline{a}\circ m_{a(0)}\circ a]=[\overline{f(a)}\circ m_{f(a(o))}\circ f(a)]=\partial [f\circ a]=\partial f_{*}(\wh{a})\;,
\end{xalignat*} and by the Lemma \ref{lemmax} it follows that $\wh{F}(\wh{a})=f_{*}(\wh{a})$ for any $\wh{a}\in Q(K_{1})$. The map $f$ induces the quandle isomorphism $\widehat{F}\colon Q(K_{1})\to Q(K_{2})$, thus $f_{*}=\widehat{F}=F^{g}$.  
\end{proof}   

\begin{corollary} \label{cor1} Let $K_{1}$ and $K_{2}$ be two nontrivial knots with isomorphic fundamental quandles. Then either $K_{1}$ and $K_{2}$ are equivalent knots, or the knot $K_{1}$ is equivalent to $rmK_{2}$. 
\end{corollary}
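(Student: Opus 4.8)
The plan is to turn the algebraic hypothesis into a topological one by feeding it into Theorem \ref{th2}, and then to read off the two alternatives according to how the resulting homeomorphism of $S^{3}$ treats the ambient orientation. First I would fix a quandle isomorphism $F\colon Q(K_{1})\to Q(K_{2})$, which exists by hypothesis. Since $K_{1}$ and $K_{2}$ are both nontrivial, Theorem \ref{th2} applies and supplies a homeomorphism $f\colon (S^{3},K_{1})\to (S^{3},K_{2})$ that preserves the orientation of the normal bundle of $K_{1}$; the extra conclusion $f_{*}=F^{g}$ plays no role here and may be discarded. The whole proof then reduces to examining the effect of $f$ on the orientation of $S^{3}$, which is the sole remaining ambiguity once Theorem \ref{th2} has fixed the behaviour of $f$ on the normal bundle.

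If $f$ preserves the orientation of $S^{3}$, there is nothing more to do: $f$ is an orientation-preserving self-homeomorphism of $S^{3}$ carrying $K_{1}$ onto $K_{2}$, so $K_{1}$ and $K_{2}$ are equivalent knots, which is the first alternative.

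If $f$ reverses the orientation of $S^{3}$, I would compose it with a fixed reflection $r\colon S^{3}\to S^{3}$. Declaring $r(K_{2})$ to carry the normal-bundle orientation transported along $r$, the knot $r(K_{2})$ is by definition a representative of the mirror image of $K_{2}$, and the composite $r\circ f\colon (S^{3},K_{1})\to (S^{3},r(K_{2}))$ now preserves the orientation of $S^{3}$; as in the previous paragraph this shows $K_{1}$ is equivalent to the mirror of $K_{2}$. To identify this mirror precisely as $rmK_{2}$ rather than $mK_{2}$, I would track the orientation of the normal bundle through the construction: equipping $K_{1}$ and $K_{2}$ with the orientations determined by their normal-bundle orientations and the orientation of $S^{3}$, the map $f$ preserves the normal-bundle orientation while reversing the orientation of $S^{3}$, hence also reverses the orientation of the underlying knot and so falls into class (4) of the list of Hoste et al.; carrying this through the reflection $r$ identifies the target as $rmK_{2}$, giving the second alternative.

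I expect the only genuine friction to be this last piece of orientation bookkeeping --- confirming that ``preserves the orientation of the normal bundle'' together with ``reverses the orientation of $S^{3}$'' forces the conclusion to be exactly $rmK_{2}$, and not one of the other mirror-and-reverse variants. Everything else is immediate from Theorem \ref{th2}, which already carries the substantive content: Waldhausen's rigidity theorem together with the identification of $G_{K}$ with $\textrm{As}(Q(K))$ (Lemma \ref{lemma5}) and of the peripheral subgroup with the stabiliser of $\widehat{s}_{K}$ (Lemma \ref{lemma4}).
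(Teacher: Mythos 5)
Your proposal is correct and follows essentially the same route as the paper: apply Theorem \ref{th2} to obtain a homeomorphism preserving the normal-bundle orientation, then use the fact that the knot orientation together with the normal-bundle orientation determines the orientation of $S^{3}$ to sort out the two alternatives. The only cosmetic difference is that you split cases on the ambient orientation while the paper splits on the knot orientation (equivalent here, since the normal-bundle orientation is fixed); just note that in your orientation-preserving case you still need that same relation to conclude that $f$ preserves the knot orientation, so that the outcome is $K_{1}=K_{2}$ rather than $K_{1}=rK_{2}$.
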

\begin{proof} By the Theorem \ref{th2}, there exists a homeomorphism $f\colon (S^{3},K_{1})\to (S^{3},K_{2})$ which fixes the orientation of the normal bundle. The orientation of a knot, coupled with the orientation of its normal bundle, gives an orientation of the ambient manifold $S^{3}$. It follows that if $f$ preserves the orientation of the knot, it must also preserve the orientation of $S^{3}$ and thus $K_{1}$ is equivalent to $K_{2}$. If, on the other hand, $f$ reverses the orientation of the knot, it must also reverse the orientation of $S^{3}$, and thus $K_{1}$ is equivalent to $rmK_{2}$. 
\end{proof}

\begin{proposition} \label{prop2} If there exists an antiautomorphism of the fundamental quandle of a nontrivial knot $K$, then either $K=rK$ or $K=mK$. 
\end{proposition}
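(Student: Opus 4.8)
The plan is to turn the algebraic hypothesis into a geometric symmetry by means of Matveev's theorem (our Theorem~\ref{th2}), and then to read off the knot symmetry from the way this homeomorphism treats orientations. The whole argument is short; the only care needed is in the orientation bookkeeping.

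First I would use Remarks~\ref{rem1} and~\ref{rem2} to reinterpret the hypothesis. Suppose $F\colon Q(K)\to Q(K)$ is a quandle antiautomorphism. By Remark~\ref{rem1}, $F$ is a quandle homomorphism $F\colon Q(K)\to Q(K)^{d}$, and since $F$ is bijective it is in fact an isomorphism $Q(K)\xrightarrow{\ \sim\ }Q(K)^{d}$. By Remark~\ref{rem2}, $Q(K)^{d}=Q(K^{d})$, where $K^{d}$ is the knot $K$ equipped with the opposite orientation of its normal bundle. So $F$ is a quandle isomorphism $Q(K)\xrightarrow{\ \sim\ }Q(K^{d})$ between the fundamental quandles of two nontrivial knots, and Theorem~\ref{th2} applies with $K_{1}=K$ and $K_{2}=K^{d}$: there exist a homeomorphism $f\colon (S^{3},K)\to (S^{3},K^{d})$ preserving the orientation of the normal bundle of $K$ and an element $g\in G_{K^{d}}$ with $f_{*}=F^{g}$.

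Next I would unwind what this homeomorphism is. As pairs of topological spaces, $(S^{3},K)$ and $(S^{3},K^{d})$ are literally the same pair; the only difference is the chosen orientation of the normal bundle, and by definition the orientation fixed on $N_{K^{d}}$ is the reverse of the one fixed on $N_{K}$. Hence, regarding $f$ as a self-homeomorphism of $(S^{3},K)$, the statement "$f$ preserves the orientation of the normal bundle of $K$" in Theorem~\ref{th2} (where the target carries the orientation defining $Q(K^{d})$) means precisely that $f$ \emph{reverses} the orientation of the normal bundle of $K$. This matching of the two meanings of "preserving the orientation of the normal bundle" across the identification $(S^{3},K^{d})=(S^{3},K)$ is the one genuinely delicate point of the proof; everything else is formal.

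Finally I would invoke the relation between the three orientations already used in the proof of Corollary~\ref{cor1}: an orientation of $K$ together with an orientation of its normal bundle determines an orientation of $S^{3}$, so a homeomorphism reversing the orientation of the normal bundle must reverse exactly one of the orientations of $K$ and of $S^{3}$. In the terminology of Hoste et al., $f$ is therefore of type $(2)$ or of type $(3)$. If $f$ is of type $(2)$, it reverses the orientation of $K$ and preserves that of $S^{3}$, so $K=rK$; if $f$ is of type $(3)$, it preserves the orientation of $K$ and reverses that of $S^{3}$, so $K=mK$. In either case we obtain $K=rK$ or $K=mK$, which is the assertion of Proposition~\ref{prop2}.
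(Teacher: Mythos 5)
Your proposal is correct and follows essentially the same route as the paper's own proof: reinterpret the antiautomorphism as an isomorphism $Q(K)\to Q(K^{d})$ via Remarks~\ref{rem1} and~\ref{rem2}, apply Theorem~\ref{th2} to obtain a homeomorphism that (viewed as a self-map of $(S^{3},K)$) reverses the orientation of the normal bundle, and then use the compatibility of the three orientations to conclude $K=rK$ or $K=mK$. You are merely more explicit than the paper about the step of identifying $(S^{3},K^{d})$ with $(S^{3},K)$, which the paper dispatches in a parenthetical remark.
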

\begin{proof} Let $F\colon Q(K)\to Q(K)$ be an antiautomorphism. By the Remarks \ref{rem1} and \ref{rem2}, $F$ defines an isomorphism between the fundamental quandles $Q(K)$ and $Q(K^{d})$. It follows by the Theorem \ref{th2} that there exists a homeomorphism $f\colon (S^{3},K)\to (S^{3},K^{d})$ that takes the orientation of the normal bundle of $K$ to the orientation of the normal bundle of $K^{d}$ (which is opposite to that of $K$). If $f$ preserves the orientation of $S^{3}$, then it must reverse the orientation of $K$ and thus $K=rK$. If, on the other hand, $f$ reverses the orientation of $S^{3}$, then it must preserve the orientation of $K$ and thus $K=mK$. 
\end{proof}

\begin{remark} \label{rem4} It follows from the Corollary \ref{cor1} and the Proposition \ref{prop2} that we cannot retrieve the information about the knot's orientation from its fundamental quandle. The knot quandle contains only the information about the orientation of the normal bundle of $K$, which defines the quandle operations. This is why we will use the term symmetry of a knot for any homeomorphism $(S^{3},K)\to (S^{3},K)$, often without knowing (or specifying) whether or not it preserves the orientation of $K$ or its ambient manifold $S^{3}$. 
\end{remark}

\begin{corollary} \label{cor2} Let $F\colon Q(K_{1})\to Q(K_{2})$ be an isomorphism of the quandles of two nontrivial knots $K_{1}$ and $K_{2}$, for which $F(\wh{s}_{K_{1}})=\wh{s}_{K_{2}}$. Then there exists a homeomorphism $f\colon (S^{3},K_{1})\to (S^{3},K_{2})$, preserving the orientation of the normal bundle, such that $f_{*}=F$. 
\end{corollary}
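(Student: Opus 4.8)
The plan is to recycle the proof of Theorem \ref{th2} almost verbatim; the only simplification is that the auxiliary element $g\in G_{K_2}$ used there to normalize the basepoint is no longer needed, since $F(\wh{s}_{K_1})=\wh{s}_{K_2}$ is now a hypothesis rather than something to be arranged. Thus one works directly with $F$ in place of the conjugated isomorphism $\widehat{F}=F^{g}$.

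First I would record the same topological setup as in Theorem \ref{th2}: with $E_{K_i}=S^{3}-\mathrm{Int}(N_{K_i})$, each $E_{K_i}$ is irreducible, sufficiently large, and (since $K_i$ is nontrivial) boundary irreducible. By Lemma \ref{lemma5}, $G_{K_i}=As(Q(K_i))$, so by Lemma \ref{lemma3} the quandle isomorphism $F$ induces a unique group isomorphism $\phi\colon G_{K_1}\to G_{K_2}$, characterized by $\phi(\partial\wh{a})=\partial F(\wh{a})$ for all $\wh{a}\in Q(K_1)$. Next I would check that $\phi$ preserves the peripheral structure: by Lemma \ref{lemma4}, $H_{K_i}$ is the stabilizer of $\wh{s}_{K_i}$, equivalently the centralizer of $m_{K_i}=\partial\wh{s}_{K_i}$, so applying $\partial$ to $F(\wh{s}_{K_1})=\wh{s}_{K_2}$ yields $\phi(m_{K_1})=m_{K_2}$, and hence $\phi$ restricts to an isomorphism $H_{K_1}\to H_{K_2}$.

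Then Waldhausen's Theorem \ref{th1} produces a homeomorphism $\overline{f}\colon E_{K_1}\to E_{K_2}$ inducing $\phi$, and because $\phi$ sends the meridian $m_{K_1}$ to the meridian $m_{K_2}$ (not merely $H_{K_1}$ onto $H_{K_2}$), $\overline{f}$ extends over the tubular neighbourhoods to a homeomorphism $f\colon (S^{3},K_1)\to (S^{3},K_2)$ preserving the orientation of the normal bundle. Finally I would identify $f_*$ with $F$ by the computation from Theorem \ref{th2}: for $\wh{a}\in Q(K_1)$ represented by a path $a$,
\[ \partial F(\wh{a})=\phi(\partial\wh{a})=[f\circ(\overline{a}\cdot m_{a(0)}\cdot a)]=[\overline{f(a)}\cdot m_{f(a(0))}\cdot f(a)]=\partial[f\circ a]=\partial f_*(\wh{a}), \]
so the bijectivity of $\partial$ (Lemma \ref{lemmax}) forces $F(\wh{a})=f_*(\wh{a})$.

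I do not anticipate a genuine obstacle, as the content is already contained in Theorem \ref{th2}; the corollary merely isolates the clean statement obtained when the basepoints match. The one step worth a careful sentence is the extension of $\overline{f}$ across $N_{K_1}$ in a way that preserves the orientation of the normal bundle, which uses the exact equality $\phi(m_{K_1})=m_{K_2}$ --- precisely what $F(\wh{s}_{K_1})=\wh{s}_{K_2}$ supplies after applying $\partial$, and the reason no conjugation by an element of $G_{K_2}$ is required here.
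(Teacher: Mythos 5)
Your proposal is correct and follows exactly the paper's route: the paper's own proof simply observes that one may take $g=1$ in the proof of Theorem \ref{th2}, so that $\wh{F}=F=f_{*}$, and your write-up is that same argument spelled out in full. No issues.
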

\begin{proof} If $F(\wh{s}_{K_{1}})=\wh{s}_{K_{2}}$, then following the proof of the Theorem \ref{th2} we may take $g=1$ and thus $\wh{F}=F=f_{*}$. 
\end{proof}

\begin{corollary} \label{cor3} Let $K$ be a nontrivial knot. For any element $h$ of the peripheral subgroup $H_{K}$, the inner automorphism $S_{h}$ is induced by a homeomorphism. 
\end{corollary}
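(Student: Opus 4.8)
The plan is to recognize $S_h$ as a quandle automorphism of $Q(K)$ that fixes the basepoint class $\widehat{s}_K$, and then to apply Corollary \ref{cor2} directly with $K_1=K_2=K$.

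First I would record that the action of $G_K$ on $Q(K)=(\Gamma_K,G_K)$ is by quandle automorphisms. For $g\in G_K$ the assignment $\widehat{a}\mapsto\widehat{a}^{\,g}$ is a bijection of $\Gamma_K$ with inverse $\widehat{a}\mapsto\widehat{a}^{\,g^{-1}}$, and using the second augmentation condition $\partial(\widehat{a}^{\,g})=g^{-1}(\partial\widehat{a})g$ one computes $(\widehat{a}^{\,g})\tr(\widehat{b}^{\,g})=(\widehat{a}^{\,g})^{\partial(\widehat{b}^{\,g})}=(\widehat{a}^{\,g})^{g^{-1}(\partial\widehat{b})g}=\widehat{a}^{\,(\partial\widehat{b})g}=(\widehat{a}\tr\widehat{b})^{\,g}$. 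By Lemma \ref{lemmax} we may write $h=\partial\widehat{c}$ for a unique $\widehat{c}\in Q(K)$, so that $S_h$ is literally the inner automorphism $S_{\widehat{c}}\colon\widehat{a}\mapsto\widehat{a}\tr\widehat{c}=\widehat{a}^{\,h}$, and in particular $S_h$ is a genuine quandle automorphism (not an antiautomorphism) of $Q(K)$.

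Next I would invoke Lemma \ref{lemma4}: the peripheral subgroup $H_K$ is precisely the stabilizer of $\widehat{s}_K\in Q(K)$ under the $G_K$-action. Hence, for every $h\in H_K$, we get $S_h(\widehat{s}_K)=\widehat{s}_K^{\,h}=\widehat{s}_K$. Now apply Corollary \ref{cor2} with $K_1=K_2=K$ and $F=S_h$: since $K$ is nontrivial and $S_h$ is a quandle isomorphism fixing $\widehat{s}_K$, there is a homeomorphism $f\colon(S^3,K)\to(S^3,K)$, preserving the orientation of the normal bundle, with $f_*=S_h$.

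I do not expect an essential obstacle here; the statement is close to an immediate consequence of Corollary \ref{cor2}. The one point that genuinely must be checked is that $S_h$ fixes $\widehat{s}_K$, so that Corollary \ref{cor2} applies with trivial correction term $g=1$ — and this is exactly the stabilizer identification in Lemma \ref{lemma4}. (Without that observation one would only obtain, via Theorem \ref{th2}, a homeomorphism realizing some conjugate $F^{g}$ of $S_h$ rather than $S_h$ itself.)
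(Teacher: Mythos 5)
Your proposal is correct and follows essentially the same route as the paper: identify $H_{K}$ as the stabilizer of $\widehat{s}_{K}$ via Lemma \ref{lemma4} and then apply Corollary \ref{cor2} with $F=S_{h}$ and $g=1$. You simply spell out two details the paper leaves implicit — that $S_{h}$ is a genuine quandle automorphism (using the augmentation condition and the bijectivity of $\partial$ from Lemma \ref{lemmax} to write $h=\partial\widehat{c}$) and that it fixes $\widehat{s}_{K}$ — both of which are accurate.
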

\begin{proof} By the Lemma \ref{lemma4}, $H_{K}$ is the stabilizer subgroup of the element $\wh{s}_{K}$. The statement then follows from the Corollary \ref{cor2}.
\end{proof}

\begin{proposition} \label{prop3} Let $F\colon Q(K)\to Q(K)$ be an (anti)automorphism of the fundamental quandle of a nontrivial knot $K$. Then $F$ is induced by a homeomorphism that preserves the orientation of the normal bundle if $F$ is an automorphism, and reverses the orientation of the normal bundle if $F$ is an antiautomorphism. 
\end{proposition}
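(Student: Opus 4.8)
The plan is to handle the automorphism and antiautomorphism cases separately, reducing in each case to Corollary~\ref{cor2} once the distinguished element $\wh{s}_{K}$ has been put in place, and then undoing that adjustment by an isotopy. Suppose first that $F$ is a quandle automorphism of $Q(K)$. Since $G_{K}$ acts transitively on $Q(K)$ by Lemma~\ref{lemma4}, I would pick $g\in G_{K}$ with $F(\wh{s}_{K})^{g}=\wh{s}_{K}$ and set $\wh{F}:=\rho_{g}\circ F$, where $\rho_{g}\colon\wh{a}\mapsto\wh{a}^{g}$ denotes the action of $g$ on $Q(K)$. The second augmentation condition $\partial(\wh{a}^{g})=g^{-1}(\partial\wh{a})g$ shows that $\rho_{g}$ is a quandle automorphism, so $\wh{F}$ is a quandle automorphism fixing $\wh{s}_{K}$, and Corollary~\ref{cor2} supplies a homeomorphism $h\colon(S^{3},K)\to(S^{3},K)$ preserving the orientation of the normal bundle with $h_{*}=\wh{F}$. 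It remains to realize $\rho_{g}^{-1}=\rho_{g^{-1}}$ by a homeomorphism; for this I would push the basepoint $z_{K}$ once around a loop $\gamma\subset E_{K}$ based at $z_{K}$ with $[\gamma]=g^{-1}$. By the isotopy extension theorem this yields an ambient isotopy $\phi_{t}$ of $E_{K}$ with $\phi_{0}=\textrm{id}$, $\phi_{1}(z_{K})=z_{K}$ and support in $\textrm{Int}(E_{K})$, so $\phi_{1}$ extends by the identity over $N_{K}$ to a homeomorphism of $(S^{3},K)$ isotopic to the identity; a short computation on representing paths gives $(\phi_{1})_{*}(\wh{a})=[\phi_{1}\circ a]=[a\cdot\gamma]=\wh{a}^{g^{-1}}$, i.e. $(\phi_{1})_{*}=\rho_{g^{-1}}$. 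Hence $(\phi_{1}\circ h)_{*}=(\phi_{1})_{*}\circ h_{*}=\rho_{g^{-1}}\circ\wh{F}=F$, and $\phi_{1}\circ h$ preserves the orientation of the normal bundle.

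Suppose now that $F$ is a quandle antiautomorphism. By Remark~\ref{rem1} it is a bijective quandle homomorphism $Q(K)\to Q(K)^{d}$, hence an isomorphism, and by Remark~\ref{rem2} $Q(K)^{d}=Q(K^{d})$, where $K^{d}$ is $K$ equipped with the reversed orientation of the normal bundle. I would run the argument of the previous paragraph with $K_{1}=K$ and $K_{2}=K^{d}$; since $E_{K^{d}}=E_{K}$ and $G_{K^{d}}=G_{K}$, the point-pushing step is unchanged, and it produces a homeomorphism $f\colon(S^{3},K)\to(S^{3},K^{d})$ with $f_{*}=F$ carrying the chosen orientation of the normal bundle of $K$ to that of $K^{d}$. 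As the latter is the reverse of the former on the same underlying knot, $f$, regarded as a self-homeomorphism of the pair $(S^{3},K)$, reverses the orientation of the normal bundle, and it still induces $F$.

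I expect the main obstacle to be the passage from the conclusion $f_{*}=F^{g}$ of Theorem~\ref{th2} (equivalently, from the reduction to Corollary~\ref{cor2}) to the equality $f_{*}=F$ on the nose, i.e. the realization of the action automorphism $\rho_{g}$ by a homeomorphism isotopic to the identity. The points that need care are that the isotopy extension theorem does apply here (the track $t\mapsto(\gamma(t),t)$ embeds $\{z_{K}\}\times[0,1]$ in $E_{K}\times[0,1]$ even when $\gamma$ is not injective) and that the self-map of $\Gamma_{K}=\pi_{1}(E_{K},\partial N_{K}\cup\{z_{K}\},z_{K})$ induced by the point-push is indeed $\rho_{[\gamma]}$, the orientation ambiguity in $\gamma$ being harmless because $\gamma$ may be replaced by $\overline{\gamma}$; the remaining orientation bookkeeping is dictated by Remark~\ref{rem2}.
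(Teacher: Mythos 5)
Your argument is correct, and it arrives at the same place as the paper---a reduction to Corollary~\ref{cor2}---but by a genuinely different mechanism at the key step. The paper does not modify $F$ at all: it simply \emph{re-chooses} the basepoint path for the target copy of $K$, taking $s_{K_{2}}$ to be a representative of $F(\wh{s}_{K_{1}})$ (and reversing the normal-bundle orientation in the antiautomorphism case, via Remarks~\ref{rem1} and~\ref{rem2}); then $F(\wh{s}_{K_{1}})=\wh{s}_{K_{2}}$ holds by construction and Corollary~\ref{cor2} yields $f_{*}=F$ immediately, with no isotopy needed. You instead keep the basepoint path fixed, replace $F$ by $\rho_{g}\circ F$ using transitivity (Lemma~\ref{lemma4}), and then undo $\rho_{g}$ by a point-pushing homeomorphism isotopic to the identity. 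Your route costs an extra topological ingredient---the isotopy extension theorem plus the verification that pushing $z_{K}$ along $\gamma$ induces $\wh{a}\mapsto[a\cdot\gamma]$ on $\Gamma_{K}$---and these are exactly the points that need care, but you handle them correctly (including the check that $\rho_{g}$ is a quandle automorphism via the second augmentation condition, and the observation that the sign ambiguity in $\gamma$ is harmless). What your route buys is a slightly sharper byproduct: every action automorphism $\rho_{g}$ with $g\in G_{K}$ is realized by a homeomorphism isotopic to the identity, which strengthens Corollary~\ref{cor3} from $H_{K}$ to all of $G_{K}$, and the equality $f_{*}=F$ is obtained relative to one fixed choice of $s_{K}$ rather than after adjusting that choice. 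Both proofs are valid; the paper's is shorter because it exploits the freedom in the auxiliary data defining $\wh{s}_{K}$ rather than correcting for it afterwards.
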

\begin{proof} Let $K_{1}=(K,o,\wh{s}_{K_{1}})$ denote the knot $K$ with a fixed orientation of the normal bundle, equipped with a fixed element $\wh{s}_{K_{1}}$. Let $s_{K_{2}}\subset E_{K}$ be a path that represents the element $F(\wh{s}_{K_{1}})\in Q(K)$. Let $K_{2}=(K,\pm o,\wh{s}_{K_{2}})$ denote the knot $K$ with the same orientation of the normal bundle if $F$ is an automorphism, and the reverse orientation if $F$ is an antiautomorphism, equipped with the element $\wh{s}_{K_{2}}=F(\wh{s}_{K_{1}})$.  Now $F\colon K_{1}\to K_{2}$ is a quandle isomorphism for which $F(\wh{s}_{K_{1}})=\wh{s}_{K_{2}}$. By the Corollary \ref{cor2}, there exists a homeomorphism $f\colon (S^{3},K)\to (S^{3},K)$ such that $f_{*}=F$. The homeomorphism $f$ preserves the orientation of the normal bundle if $F$ is an automorphism, and reverses the orientation of the normal bundle if $F$ is an antiautomorphism. 
\end{proof}

\end{section}

\begin{section}{Examples and calculations}\label{sec4}
 
\begin{figure}[h]
\labellist
\normalsize \hair 2pt
\pinlabel $a$ at 20 170
\pinlabel $b$ at 100 115
\pinlabel $c$ at  180 70
\pinlabel $d$ at 45 45 
\pinlabel $e$ at 150 170
\pinlabel $f$ at 150 60
\pinlabel $g$ at 40 130
\pinlabel $h$ at 10 20 
\pinlabel $i$ at 130 140
\endlabellist
\begin{center}
\includegraphics[scale=0.8]{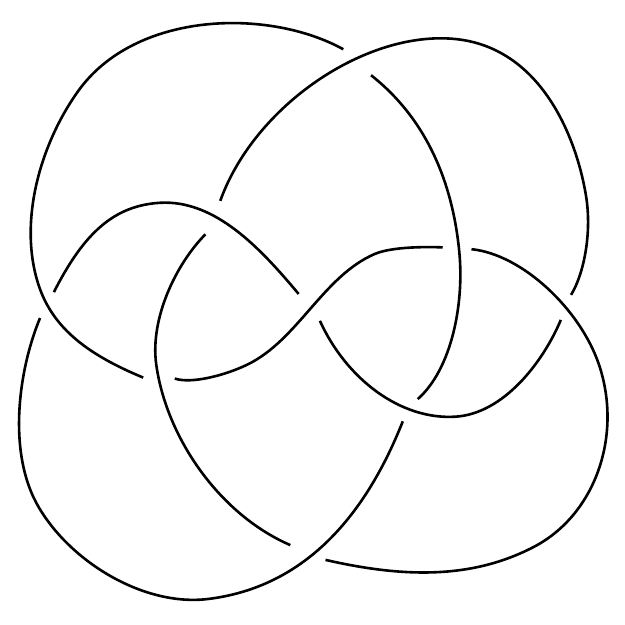}
\caption{The knot $9_{40}$}
\label{fig:9_40}
\end{center}
\end{figure}
\begin{example} \label{ex5}
Consider the knot $9_{40}$ in the Rolfsen knot table, whose diagram is given on the Figure \ref{fig:9_40}. The presentation of its fundamental quandle is given by 
\begin{xalignat*}{1}
& Q(9_{40})=\langle a,b,c,d,e,f,g,h,i|\, i\tr e=a, h\tr a=g, b\tr d=a, f\tr b=g, c\tr i=b, \\
& f\tr c=e,c\tr h=d,e\tr g=d,i\tr f=h\rangle \;.
\end{xalignat*} 
Define a map $F\colon Q(9_{40})\to Q(9_{40})$ by $F(a)=g, F(b)=h, F(c)=i, F(d)=a,F(e)=b,F(f)=c,F(g)=d,F(h)=e,F(i)=f$. It is easy to check that $F$ preserves the crossing relations of the above presentation and thus defines a quandle automorphism of order 3. It follows by the Proposition \ref{prop3} that there exists a homeomorphism $f\colon (S^{3},9_{40})\to (S^{3},9_{40})$ of order 3. 
\end{example}

\begin{figure}[h]
\labellist
\normalsize \hair 2pt
\pinlabel $a$ at 180 70
\pinlabel $b$ at 80 185
\pinlabel $c$ at  0 60
\endlabellist
\begin{center}
\includegraphics[scale=0.6]{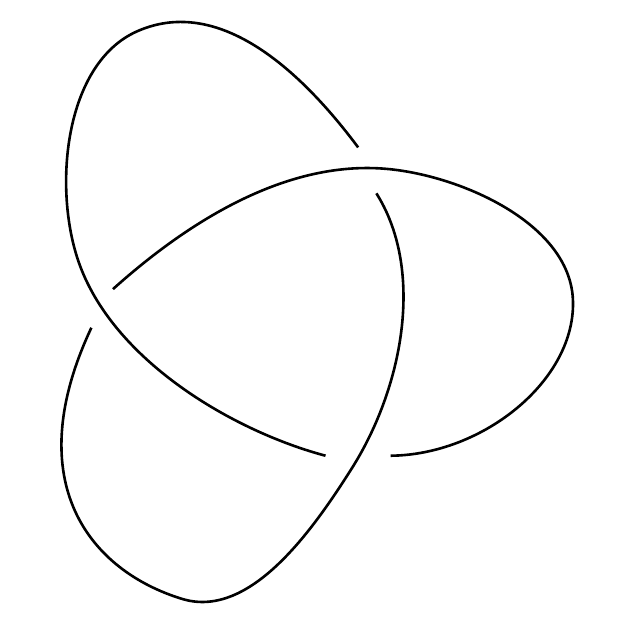}
\caption{The trefoil knot $3_{1}$}
\label{fig:3_1}
\end{center}
\end{figure}
\begin{example} \label{ex6} The trefoil knot $3_{1}$ has the following presentation of the fundamental quandle, obtained from the diagram in the Figure \ref{fig:3_1}: 
\[ Q(3_{1})=\left <a,b,c|\, a\tr b=c, b\tr c=a, c\tr a=b\right >\;.\] The corresponding presentation of the fundamental group $G_{K}$ is given by 
\begin{xalignat*}{1}
& G_{K}=\langle \partial a,\partial b,\partial c|\, (\partial b)^{-1}(\partial a)(\partial b)=\partial c, (\partial c)^{-1}(\partial b)(\partial c)=\partial a, (\partial a)^{-1}(\partial c)(\partial a)=\partial b\rangle \;.
\end{xalignat*}
Consider the element $g=(\partial a)(\partial b)=(\partial b)(\partial c)=(\partial c)(\partial a)\in G_{K}$. We calculate $a^{g}=a\tr b\tr c=c$, $b^{g}=b\tr c\tr a=a$ and $c^{g}=c\tr c\tr a=b$, so the inner automorphism $S_{g}$ is a quandle automorphism of order 3. It follows by the Proposition \ref{prop3} that the trefoil knot admits a symmetry of order 3.  
\end{example}

\begin{figure}[h]
\labellist
\normalsize \hair 2pt
\pinlabel $a$ at 180 70
\pinlabel $c$ at 80 175
\pinlabel $b$ at  0 60
\pinlabel $d$ at 80 120
\endlabellist
\begin{center}
\includegraphics[scale=0.6]{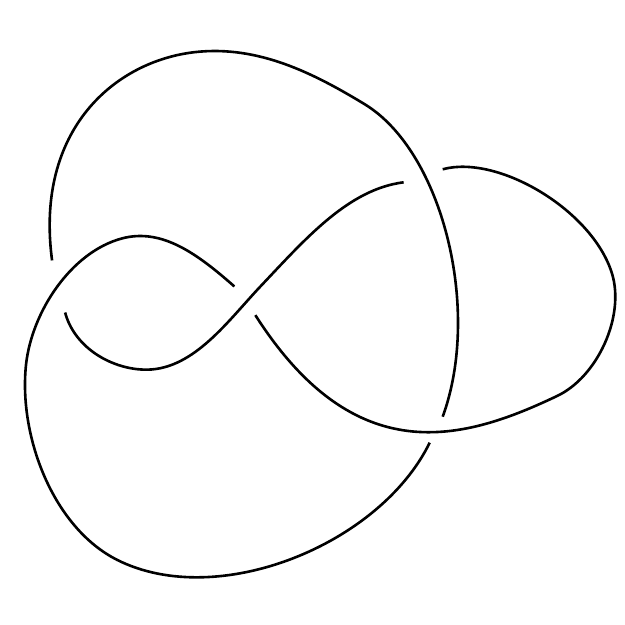}
\caption{The figure-eight knot $4_{1}$}
\label{fig:4_1}
\end{center}
\end{figure}
\begin{example} \label{ex7} The figure-eight knot $4_1$ has the following presentation of the fundamental quandle, obtained from the diagram in the Figure \ref{fig:4_1}: 
\[ Q(4_{1})=\left <a,b,c,d|\, c\tr a=b, a\tr d=b, a\tr c=d, c\tr b=d\right >\;.\]  Define a map $F\colon Q(K)\to Q(K)$ by $F(a)=c$, $F(b)=d$, $F(c)=a$ and $F(d)=b$. It is easy to see that $F$ preserves the crossing relations in the above presentation of $Q(4_1)$ and thus defines a quandle automorphism of order 2. It follows by the Proposition \ref{prop3} that the figure-eight knot admits a symmetry of order 2.  
\end{example}

By the Proposition \ref{prop3}, each (anti)automorphism of the fundamental quandle of a nontrivial knot is induced by a knot symmetry of $K$. Moreover, the existence of a quandle automorphism of order $p$ implies that $K$ admits a symmetry of order $p$. Thus, by observing the set of quandle (anti)automorphisms of $Q(K)$ we might learn something about the knot symmetries of $K$. Of course, the set $Aut^{C}(Q(K))$ of all quandle automorphisms is usually too large to grasp in its entirety. Instead, we might try to consider a simple subset of $Aut^{C}(Q(K))$ that is directly related to a given diagram of $K$.  

\begin{definition} \label{def7} Let $D_{K}$ be a diagram of a knot $K$ with an ordered set of arcs $(x_{1},\ldots ,x_{n})$, which defines a presentation $\langle x_{1},\ldots ,x_{n}|\, \textrm{ crossing relations }\rangle $ of the fundamental quandle $Q(K)$. A permutation $\sigma \in S_{n}$ is called a \textbf{$Q$-permutation} for $D_{K}$ if the map $x_{i}\mapsto x_{\sigma (i)}$ defines a quandle (anti)automorphism of $Q(K)$. The \textbf{$Q$-group} of the diagram $D_{K}$ is the subgroup of $S_{n}$ containing all the $Q$-permutations of $D_{K}$. 
\end{definition}

If $\sigma \in S_{n}$ is a $Q$-permutation, then the map $F_{\sigma }\colon Q(K)\to Q(K)$, defined on the generating set by $F_{\sigma }(x_{i})=x_{\sigma (i)}$, defines a quandle (anti)automorphism. By the Proposition \ref{prop3}, there exists a homeomorphism $f\colon (S^{3},K)\to (S^{3},K)$ such that $f_{*}=F_{\sigma }$. Therefore, every $Q$-permutation defines a knot symmetry of $K$, and the $Q$-group of $D_{K}$ corresponds to some subset of the automorphism set $Aut(S^{3},K)$. 

\begin{lemma} \label{lemma6} If $D_{K}$ is a knot diagram with $n$ arcs, then the order of the $Q$-group of $D_{K}$ is at most $2n$. 
\end{lemma}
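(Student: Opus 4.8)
The plan is to show that a $Q$-permutation $\sigma$ is essentially determined by the image of a single arc. Recall from Example \ref{ex3} that the generators $x_1,\ldots,x_n$ of the presentation $\langle x_1,\ldots,x_n\mid\text{crossing relations}\rangle$ all map, under the augmentation $\partial$, to the Wirtinger generators (meridians) of $G_K$, and that $G_K = As(Q(K))$ acts transitively on $Q(K)$ by Lemmas \ref{lemma4} and \ref{lemma5}. The key observation is that the fundamental quandle of a knot is \emph{connected} (one orbit under $Inn(Q(K))$), and in fact every element is reached from any fixed generator by a chain of the quandle operations coming from crossing relations — so the arcs $x_1,\ldots,x_n$ all lie in a single orbit and, moreover, a quandle (anti)automorphism of $Q(K)$ is determined by its value on one generator once we know whether it is an automorphism or an antiautomorphism.

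First I would make this precise. Let $F_\sigma\colon Q(K)\to Q(K)$ be the (anti)automorphism induced by a $Q$-permutation $\sigma$, and suppose we are told $\sigma(1)=j$, i.e.\ $F_\sigma(x_1)=x_j$. If $F_\sigma$ and $F_\tau$ are two $Q$-permutations of the same type (both automorphisms, or both antiautomorphisms) with $\sigma(1)=\tau(1)$, then $F_\sigma F_\tau^{-1}$ is a \emph{quandle automorphism} fixing $x_1$; I claim it is the identity. Indeed, every arc $x_i$ can be written as a word in $x_1$ and the other generators using the crossing relations, but more to the point: since $G_K$ is generated by $\partial x_1,\ldots,\partial x_n$ and acts transitively with stabilizer of $\widehat{s}_K$ equal to a conjugate of $H_K$, and since any automorphism fixing a generator $x_1$ commutes with the $G_K$-action in the sense of Remark \ref{rem3} with $F_\#$ fixing the corresponding meridian — one deduces that an automorphism of $Q(K)$ fixing $x_1$ induces, via Lemma \ref{lemma3}, an automorphism of $G_K$ fixing the meridian $\partial x_1 = m_K$ and hence (by the conjugation formula $\partial(x^g)=g^{-1}(\partial x)g$) fixing $\partial x_i$ for every $i$ once it fixes the $G_K$-orbit structure; then Lemma \ref{lemmax} ($\partial$ is a bijection) forces it to fix every $x_i$. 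So the map sending a $Q$-permutation to the pair (its type $\in\{\text{auto},\text{anti}\}$, its value $\sigma(1)\in\{1,\ldots,n\}$) is injective.

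There are at most $2$ types and at most $n$ possible values of $\sigma(1)$, hence at most $2n$ elements in the $Q$-group, which is the desired bound. The main obstacle is the claim in the previous paragraph — that a quandle automorphism of $Q(K)$ fixing one of the Wirtinger generators $x_1$ must be the identity. The cleanest route is: such an automorphism $F$ fixes $\widehat{s}_K$ if we arrange $x_1=\widehat{s}_K$ (or conjugate so that it does), so by Corollary \ref{cor2} it is induced by a homeomorphism $f\colon(S^3,K)\to(S^3,K)$ preserving the peripheral structure and fixing the basepoint data; chasing through Lemma \ref{lemma3}, $F_\#$ is an automorphism of $G_K=\pi_1(E_K)$ fixing the peripheral subgroup $H_K$ pointwise — but actually we only need that $F_\#$ fixes each meridian $\partial x_i$, which follows because the $\partial x_i$ are all conjugate to $m_K=\partial x_1$ inside $G_K$ and $F_\#$ respects the $G_K$-action while fixing $\partial x_1$; applying $\partial^{-1}$ (Lemma \ref{lemmax}) gives $F(x_i)=x_i$ for all $i$. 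One must be slightly careful that $\sigma(1)=\tau(1)$ alone pins down the conjugating element $g$ appearing in Theorem \ref{th2}, but since we compare $F_\sigma$ and $F_\tau$ directly as self-maps of the \emph{same} quandle $Q(K)$, the composite $F_\sigma F_\tau^{-1}$ genuinely fixes $x_1$ on the nose and the argument applies without an extra conjugation ambiguity.
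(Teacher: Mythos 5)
Your counting frame --- a $Q$-permutation is pinned down by its type (automorphism or antiautomorphism) together with the image of a single arc, giving at most $2\cdot n$ elements --- is exactly the paper's. The problem is the justification you give for the injectivity. The claim you reduce everything to, namely that a quandle automorphism of $Q(K)$ fixing a Wirtinger generator $x_{1}$ must be the identity, is false: the inner automorphism $S_{x_{1}}(y)=y\tr x_{1}$ fixes $x_{1}$ by the first quandle axiom, yet for a nontrivial knot it is not the identity. (Concretely, for the trefoil $S_{a}(b)=b\tr a=b$ would force $\partial a$ to lie in the stabilizer of $\wh{b}$, an abelian peripheral subgroup containing $\partial b$, hence force $\partial a$ and $\partial b$ to commute --- which they do not in the trefoil group.) The group-theoretic step fails at exactly the point you flag as the ``main obstacle'': if $\partial x_{i}=g_{i}^{-1}(\partial x_{1})g_{i}$, then $F_{\#}(\partial x_{i})=F_{\#}(g_{i})^{-1}(\partial x_{1})F_{\#}(g_{i})$, which is again a conjugate of the meridian but need not equal $\partial x_{i}$; knowing that $F_{\#}$ fixes $m_{K}$ says nothing about the other Wirtinger generators. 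Likewise, invoking Corollary \ref{cor2} only tells you that $F$ is induced by \emph{some} homeomorphism fixing the basepoint data, not that this homeomorphism, or $F$, is trivial --- nontrivial symmetries can fix $\wh{s}_{K}$ (see Corollary \ref{cor3}).

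What actually makes the count work, and what the paper uses, is that $F_{\sigma}F_{\tau}^{-1}$ is not an arbitrary automorphism fixing $x_{1}$ but one that permutes the finite set of arcs, and the crossing relations of a connected diagram propagate the permutation from one arc to its neighbours: in a relation $x_{i}\tr x_{j}=x_{k}$, once $\sigma(i)$ and the type are known, $\sigma(j)$ and $\sigma(k)$ are forced, and connectivity of the diagram then determines $\sigma$ on every arc. You gesture at this (``every arc can be written as a word in the generators using the crossing relations'') but then abandon it for the meridian-fixing argument, which cannot substitute for it. So the bound and the counting scheme are right, but the key injectivity step needs the diagram-propagation argument rather than the one you develop.
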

\begin{proof} Let $(x_{1},\ldots ,x_{n})$ be the ordered set of arcs of the diagram $D_{K}$. Pick a crossing relation $x_{i}\tr x_{j}=x_{k}$. If $\sigma $ is a $Q$-permutation, then the image $\sigma (i)$ and the information about whether $F_{\sigma }$ defines an automorphism or an antiautomorphism of $Q(K)$ determine the images $\sigma (j)$ and $\sigma (k)$ and the images of all the other integers are determined by the crossing relations.
\end{proof}

A good thing about the $Q$-group is that it may be quite easily calculated from the planar representation of a knot diagram $D_{K}$. By the Lemma \ref{lemma6}, the order of the $Q$-group is bounded by twice the size of the knot diagram, so the time complexity of a suitable computer implementation should not be a problem. In the Subsection \ref{subs41} we provide a \verb|Phyton| code for the calculation of the $Q$-group from an alternating planar diagram.
  
The following table shows the calculations of the $Q$-groups of the minimal diagrams of some alternating knots. We used the diagrams given in the Rolfsen knot table at Knotatlas \cite{KAT}. 
\begin{center}
\begin{tabular}{|l|l|}
\hline
Knot $K$ & $Q$-group of $D_{K}$\\ \hline
$3_1$ & $D_{3}$\\ \hline
$4_1$ & $\ZZ _{4}$\\ \hline
$5_1$ & $D_{5}$\\ \hline
$5_2$ & $\ZZ _{2}$ \\ \hline
$6_1$ & $\ZZ _{2}$ \\ \hline
$6_2$ & $\{\rm{id}\}$ \\ \hline
$6_3$ & $\{\rm{id}\}$\\ \hline
$7_1$ & $D_{7}$ \\ \hline
$7_2$ & $\ZZ _{2}$ \\ \hline
$7_3$ & $\ZZ _{2}$ \\ \hline
$7_4$ & $\ZZ _{2}$\\ \hline
$9_{40}$ & $\ZZ _{6}$\\ \hline
\end{tabular}
\end{center}

\begin{subsection}{The Phyton code}\label{subs41}
\begin{verbatim}
from sympy.combinatorics.permutations import Permutation
from sympy.combinatorics.generators import symmetric
Permutation.print_cyclic=False
def quandlesym(PD):
    ### calculates the $Q$-group of an alternating knot diagram###
    L,P1,R=[],[],[]
    P=list(symmetric(len(PD)))
    for X in PD:
        L=L+[X[2],X[4]]
    for X in PD:
        if X[0] == 1:
            P1.append([L.index(X[1])//2,L.index(X[2])//2,L.index(X[3])//2])
        else:
            P1.append([L.index(X[3])//2,L.index(X[2])//2,L.index(X[1])//2])
    for i in range(1,len(P)+1):
        v1,v2 = True,True
        for Y in P1:
            if [P[i-1](Y[0]),P[i-1](Y[1]),P[i-1](Y[2])] not in P1:
                v1 = False
            if [P[i-1](Y[2]),P[i-1](Y[1]),P[i-1](Y[0])] not in P1:
                v2 = False
        if v1 or v2:
            R.append(P[i-1])
    return(R)
\end{verbatim}
\end{subsection}
\end{section}

\end{document}